\numberwithin{equation}{section}
\newtheorem{theorem}{Theorem}
\newtheorem{corollary}{Corollary}
\newtheorem{remark}{Remark}
\newtheorem{lemma}{Lemma}
\newtheorem{proposition}{Proposition}
\newtheorem{definition}{Definition}
\newtheorem{observation}{Observation}
\begin{document}
\title{ \textbf{Group Vertex Magicness of $H$-join and Generalised Friendship Graph}}
\baselineskip 16pt
\author{\large S. Balamoorthy, S.V. Bharanedhar \footnote{Corresponding Author. \\ E-Mail addresses: moorthybala545@gmail.com (S. Balamoorthy), bharanedhar3@gmail.com (S.V. Bharanedhar)  }\\
{\small Department of Mathematics,\\ Central University of Tamil Nadu,\\ Thiruvarur 610005,\\ India.}}
\date{\today}
\maketitle

\begin{abstract}
Let $A$ be a non-trivial Abelian group. A graph $G$ is $A$-vertex magic if there exists a labeling $l : V(G) \to A\setminus\{0\}$ and $\mu$ in $A$ such that $w(v)= \sum_{u \in N_G(v)} l(u) = \mu $ for any vertex $v$ of $G$, where $N_G(v)= \{u\in V(G) : vu \in E(G)\}$ is the open neighborhood of the vertex $v$ in $G$. In this paper, we characterize $\mathbb{Z}$-vertex magic graphs. We prove that $G$ is $A$-vertex magic if its reduced graph  is $A$-vertex magic, where $A$ is a non-trivial Abelian group. We also construct a family of $A$-vertex magic graphs using $H$-generalized join operation. In addition, we  introduce a new type of labeling called $A'$-vertex magic labeling of graphs and characterize $A$-vertex  magicness using $A'$-vertex magicness. We give a new procedure to embed any graph as an induced subgraph of an $A$-vertex magic graph and we construct infinite families of $A$-vertex magic graphs both of these procedure uses less number of vertices compared to the one given in (Sabeel et al. in Australas. J. Combin. 85(1) (2023), 49-60). Moreover, we generalize some results proved in (Sabeel et al. in Australas. J. Combin. 85(1) (2023), 49-60). Finally we completely classify  generalised friendship graph using $A$-vertex magicness and group vertex magicness. \\
\medskip
\noindent {\bf  Keywords:} $A$-vertex magic; Group vertex magic; $H$-join; Generalised Friendship graph;\medskip
\end{abstract}
\section{Introduction}
All the graphs considered in this paper are connected simple  finite graphs and $A$ denote a non-trivial Abelian group, not necessarily finite. Let $G=(V(G),E(G))$ be a graph, we denote by $V(G)$  the vertex set of $G$.
A graph $G$ is said to be $A$-vertex magic if there exists a labeling $l : V(G) \to A\setminus\{0\}$ and $\mu$ in $A$ such that $w(v)= \sum_{u \in N_G(v)} l(u) = \mu $ for any vertex $v$ of $G$, where $N_G(v)= \{u\in V(G) : vu \in E(G)\}$ is the open neighborhood of the vertex $v$ in $G$.  The element $\mu$ is called a magic constant of the labeling $l$. A graph $G$ that admits such a labeling is called an $A$-vertex magic graph. If $G$ is an $A$-vertex magic graph for every non-trivial Abelian group $A$, then $G$ is called a group vertex magic graph. We denote the degree of a vertex $v$ in a graph $G$ by $d_G(v)$. If $v\in V(G)$ is such that $d_G(v)=1$, then $v$ is called a pendant vertex of $G$. In this case, the vertex to which $v$ is adjacent is called a support vertex. A vertex $w$ is called a strong support vertex if $w$ is adjacent to at least two pendant vertices.

The concept of group vertex magic graphs was introduced by Kamatchi et al. \cite{kama} and they have studied some properties of this magicness for trees with diameter at most 4.
The main motivation of group vertex magic graph is group magic graph, which was introduced by Lee et al. \cite{lee0} and this work was further stuided by Lee et al. \cite{lee1}, Low and Lee  \cite{lee2}. In \cite{Sabeel}, Sabeel et al. have completely characterized $A$- vertex magic trees of diameter at most $5$, where $A$ is a finite Abelian group. 
In \cite{Bala1}, the present authors have given a necessary condition for a graph to be $A$-vertex magic and they have obtained results for the $A$-vertex magicness of product of graphs.

In this paper, we use group elements to label the vertices of a graph and we have extended the study of group vertex magicness of a graph by considering an arbitrary Abelian group.
This work considers the vertex magicness of $H$-join and generalised friendship graphs. We use the following important results in group theory namely Cauchy’s theorem, Sylow’s first theorem and fundamental theorem of finite Abelian groups. We refer to Bondy and Murty \cite{bondy1976graph} for graph theoretic terminology and notations. We refer to Herstein \cite{herstein} for ideas in algebra.

Let $H$ be a graph with vertex set $\{v_1,v_2,\ldots,v_k\}$ and let $G_1,G_2,\ldots,G_k$ be arbitrary graphs. The $H$-join operation of the graphs $G_1,G_2,\ldots,G_k$ is denoted as $H[G_1,G_2,\ldots,G_k]$,  is obtained by replacing the vertex $v_i$ of $H$ by the graph $G_i$ for $1 \leq i \leq k$ and every vertex of $G_i$ is made adjacent with every vertex of $G_j$, whenever $v_i$ is adjacent to $v_j$ in $H$. The vertex set $V(G)= \displaystyle \bigcup_{i=1}^k V(G_i)$ and edge set $E(G) =  \displaystyle \big(\bigcup_{i=1}^k E(G_i) \big) \cup \big(\bigcup_{v_iv_j \in E(H)} \{uv : u \in V(G_i), v \in V(G_j)\}\big)$ (see \cite{Saravanan} and \cite{Vinoth}). If $G\cong G_i$, for $1\leq i\leq k$, then $H[G_1,G_2,\ldots, G_k]\cong H[G]$, the \emph{lexicographic product} of $H$ and $G$. 

Let $R$ be a commutative ring an element $x$ of $R$ is said to be a unit if it has a multiplicative inverse. A non-zero element $u$ of R is said to be a zero-divisor if its product with a non-zero element yields zero. Let $Z(R)$ denotes set of all zero-divisor of $R$. Given a ring $R$, we associate  a simple graph $\Gamma(R)$ to $R$ with vertex set the set of all zero-divisor $Z(R)$ of $R$ and two distinct vertices $u$ and $v$ are adjacent if and only if $uv=0$. This graph $\Gamma(R)$ is called the zero-divisor graph of $R$.

A caterpillar is a tree in which the removal of pendant vertices yields a path.

\begin{observation}\label{BP2007}
If $G$ is a connected graph with $|V(G)| \leq 3$, then $G$ is $A$-vertex magic, where $|A|>2$.
\end{observation}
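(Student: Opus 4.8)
The plan is to prove the statement by an exhaustive case analysis, exploiting the fact that there are only four connected graphs on at most three vertices: the single vertex $K_1$, the edge $K_2$, the path $P_3$, and the triangle $K_3$. For each of them I would exhibit an explicit labeling $l : V(G) \to A \setminus \{0\}$ together with a magic constant $\mu \in A$, and verify that $w(v) = \mu$ at every vertex.

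The cases $K_1$, $K_2$, and $K_3$ are immediate. For $K_1$ the open neighbourhood is empty, so every labeling gives $w(v) = 0$ and we may take $\mu = 0$. For $K_2$, labeling both vertices by a common nonzero element $x$ (which exists since $A$ is non-trivial) yields $w = x$ at each vertex, so $\mu = x$. For $K_3 = v_1 v_2 v_3$, the magic condition $w(v_i) = w(v_j)$ forces $l(v_i) = l(v_j)$ for all $i,j$; conversely, the constant labeling $l \equiv x$ with any nonzero $x$ gives $w \equiv 2x$, so $\mu = 2x$ works. Observe that for these three graphs mere non-triviality of $A$ suffices.

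The only case that genuinely requires $|A| > 2$ is the path $P_3 = v_1 v_2 v_3$. Here $w(v_1) = w(v_3) = l(v_2)$ automatically, and the remaining constraint is $l(v_1) + l(v_3) = l(v_2)$; thus the task reduces to finding nonzero elements $a, b, c \in A$ with $a + b = c$, after which the labeling $l(v_1) = a$, $l(v_2) = c$, $l(v_3) = b$ is $A$-vertex magic with $\mu = c$. To produce such a triple I would split into two subcases. If some nonzero $x \in A$ satisfies $2x \neq 0$, take $a = b = x$ and $c = 2x$. Otherwise every element has order dividing $2$, so $A$ is an elementary abelian $2$-group with more than two elements; then I may choose two distinct nonzero elements $a \neq b$, and $a + b \neq 0$ (since $a + b = 0$ would give $b = -a = a$), so $c = a + b$ is the required element.

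I expect the elementary abelian $2$-group subcase of $P_3$ to be the main (and really the only) obstacle: it is precisely where the hypothesis $|A| > 2$ is used, and it reflects the genuine fact that $P_3$ fails to be $\mathbb{Z}_2$-vertex magic, since there $l(v_1) + l(v_3) = 1 + 1 = 0 \neq 1 = l(v_2)$. Once the triple $(a,b,c)$ is secured in each subcase, confirming that the exhibited labelings meet $w(v) = \mu$ at every vertex is a routine check.
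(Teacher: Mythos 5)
Your proposal is correct and complete: the paper states this result as an Observation with no proof at all, and your exhaustive check of the four connected graphs $K_1$, $K_2$, $P_3$, $K_3$ is exactly the routine verification the authors are implicitly taking for granted. Your treatment of $P_3$ is the one point of substance — correctly splitting into the case $2x \neq 0$ and the elementary abelian $2$-group case, and noting that this is precisely where the hypothesis $|A| > 2$ is used (since $P_3$ fails for $A = \mathbb{Z}_2$) — so nothing is missing.
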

The graph $P_4$ is not $A$-vertex magic graph with minimum number of edges.

\begin{theorem}\label{BP2001}
Let $G$ be a $\mathbb{Z}$-vertex magic graph. Then there exists  $k\in \mathbb{N}$ such that $G$ is $\mathbb{Z}_m$-vertex magic, for all $m \ge k$. 
\end{theorem}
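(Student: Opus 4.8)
The plan is to start from a $\mathbb{Z}$-vertex magic labeling of $G$ and push it forward to $\mathbb{Z}_m$ via reduction modulo $m$, choosing $m$ large enough that no label collapses to $0$. Concretely, since $G$ is $\mathbb{Z}$-vertex magic, fix a labeling $l : V(G) \to \mathbb{Z}\setminus\{0\}$ and a constant $\mu \in \mathbb{Z}$ with $\sum_{u \in N_G(v)} l(u) = \mu$ for every $v \in V(G)$. For each positive integer $m$, let $\pi_m : \mathbb{Z} \to \mathbb{Z}_m$ be the canonical surjection and set $l_m := \pi_m \circ l$. The entire construction is this composition; everything else is verification.

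The key step is twofold. First, because $\pi_m$ is a group homomorphism, the magic condition transfers for free: for every $v$ we have $\sum_{u \in N_G(v)} l_m(u) = \pi_m\big(\sum_{u \in N_G(v)} l(u)\big) = \pi_m(\mu)$, a single element of $\mathbb{Z}_m$ independent of $v$. Thus $l_m$ already has constant weight $\mu_m := \pi_m(\mu)$. Second, I must ensure that $l_m$ actually takes values in $\mathbb{Z}_m\setminus\{0\}$, i.e. that no vertex receives the label $0$. This is the only thing that can fail, and it is where the threshold $k$ enters. Since $V(G)$ is finite and each $l(u)\neq 0$, set $k := 1 + \max_{u \in V(G)} |l(u)|$. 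For any $m \geq k$ and any vertex $u$ we have $0 < |l(u)| < m$, so $m \nmid l(u)$ and hence $l_m(u) = \pi_m(l(u)) \neq 0$ in $\mathbb{Z}_m$. Therefore $l_m : V(G) \to \mathbb{Z}_m\setminus\{0\}$ is a well-defined labeling with constant weight $\mu_m$, which shows $G$ is $\mathbb{Z}_m$-vertex magic for all $m \geq k$.

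I expect essentially no serious obstacle here: the homomorphism $\pi_m$ does all the work on the magic condition, and finiteness of $V(G)$ supplies a uniform threshold $k$ beyond which reduction can never annihilate a label. The single point requiring care is precisely the nonvanishing of the reduced labels, which is exactly why the conclusion is asserted only for sufficiently large $m$ rather than for every $m$; for small $m$ a label $l(u)$ divisible by $m$ would force $l_m(u)=0$, violating the requirement that the codomain be $\mathbb{Z}_m\setminus\{0\}$.
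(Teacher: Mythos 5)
Your proposal is correct and is essentially the paper's own argument: the paper likewise sets $k = \max_i |l(v_i)| + 1$ and pushes the labeling forward along the homomorphism $\mathbb{Z} \to \mathbb{Z}_m$ given by $n \mapsto na$ for a generator $a$ of $\mathbb{Z}_m$, which for $a=1$ is exactly your reduction map $\pi_m$. Your write-up just makes explicit the two verifications (nonvanishing of labels for $m \ge k$, and transfer of the constant weight via the homomorphism) that the paper dismisses with ``Clearly.''
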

\begin{proof}
Assume that $l$ is $\mathbb{Z}$-vertex magic labeling of $G$. Then $l(v_i)=k_i$, where $k_i \in \mathbb{Z}, i=1,2,\ldots,n$. Let $k= max\{|k_i|\}+1$. Now, define $l' : V(G) \to \mathbb{Z}_m \setminus \{0\}$ by $l'(v_i)=l(v_i)a$, where $o(a)=m$ and $m \ge k$. Clearly, the labeling $l'$ is $\mathbb{Z}_m$-vertex labeling of $G$, for all $m \ge k$.
\end{proof}

\begin{theorem}\label{BP2002}
Let $m \in \mathbb{N}$ and $l$ is a $\mathbb{Z}_p$-vertex magic labeling of $G$, for all primes $p \ge m$. Then $G$ is $\mathbb{Z}$-vertex magic.
\end{theorem}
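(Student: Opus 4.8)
The plan is to recast vertex magicness as a homogeneous linear system and then exploit the contrast between the infinite field $\mathbb{Q}$ and the residue fields $\mathbb{Z}_p=\mathbb{F}_p$. Label the vertices $v_1,\dots,v_n$, let $M$ denote the adjacency matrix of $G$ over $\mathbb{Z}$, and let $\mathbf 1$ be the all-ones column vector. A labeling $l$ with magic constant $\mu$ over a field $F$ is precisely a vector $\mathbf x=(x_1,\dots,x_n)^{T}$ with a scalar $\mu$ satisfying $M\mathbf x=\mu\mathbf 1$, subject only to $x_i\neq 0$ for every $i$ (the constant $\mu$ is unconstrained and may be $0$). Writing $N=[\,M \mid -\mathbf 1\,]$, an $n\times(n+1)$ integer matrix, the magic labelings over $F$ are exactly the vectors $(\mathbf x,\mu)\in\ker_F N$ whose first $n$ coordinates are all nonzero. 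Since the system is homogeneous, any rational solution with all of $x_1,\dots,x_n$ nonzero can be scaled by a common denominator to an integer solution with the same nonvanishing property. Hence it suffices to produce a single vector in $W:=\ker_{\mathbb Q}N\subseteq\mathbb Q^{n+1}$ whose coordinates $x_1,\dots,x_n$ are all nonzero.

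I would argue by contradiction. Suppose no such rational vector exists; then $W$ is contained in $\bigcup_{i=1}^{n}H_i$, where $H_i=\{(\mathbf x,\mu):x_i=0\}$. Because $\mathbb Q$ is infinite, a vector space cannot be the union of finitely many proper subspaces, so $W\subseteq H_{i_0}$ for some single index $i_0\le n$; equivalently the coordinate functional $x_{i_0}$ vanishes identically on $\ker_{\mathbb Q}N$. Over a field the annihilator of the kernel is the row space (both have dimension $\operatorname{rank}N$), so $e_{i_0}^{T}$ lies in the $\mathbb Q$-row space of $N$: there are rationals $c_j$ with $e_{i_0}^{T}=\sum_j c_j\,(\text{$j$-th row of }N)$.

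Clearing denominators yields a positive integer $d$ and integers $a_j$ with $d\,e_{i_0}^{T}=\sum_j a_j\,(\text{$j$-th row of }N)$ as an identity of integer vectors. Now I would take any prime $p\ge m$ with $p\nmid d$; there are infinitely many such primes since $d$ has only finitely many prime divisors. Reducing the identity modulo $p$ and multiplying by $d^{-1}\in\mathbb F_p$ shows that $e_{i_0}^{T}$ lies in the row space of $N$ over $\mathbb F_p$, so the functional $x_{i_0}$ vanishes on every vector of $\ker_{\mathbb F_p}N$. But the hypothesis supplies a $\mathbb Z_p$-vertex magic labeling for this $p$, that is, a vector in $\ker_{\mathbb F_p}N$ with all first $n$ coordinates nonzero, in particular $x_{i_0}\neq 0$ --- a contradiction. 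Therefore $W$ contains a vector with $x_1,\dots,x_n$ all nonzero, and scaling it to $\mathbb Z$ gives the desired $\mathbb Z$-vertex magic labeling of $G$.

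The hard part, and the only place where the hypothesis is used for more than a single prime, is the descent of the rational linear relation among the rows to a relation valid over $\mathbb F_p$: this is exactly why the denominator $d$ must be dodged and why having $\mathbb Z_p$-labelings for infinitely many (indeed all large) primes matters. The remaining ingredients --- the reduction to an all-nonzero rational kernel vector, the infinite-field union-of-subspaces lemma, and the homogeneity that permits scaling $\mathbb Q$ to $\mathbb Z$ --- are routine and should present no obstacle.
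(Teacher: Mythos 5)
Your proof is correct, but it takes a genuinely different route from the paper's. The paper reads the hypothesis as asserting that a \emph{single} coefficient vector $(k_1,\dots,k_n)$ gives a magic labeling in every $\mathbb{Z}_p$ with $p\ge m$: it lifts that labeling verbatim to $\mathbb{Z}$ and observes that if two vertex weights differed as integers, then (since all weights are bounded in absolute value in terms of $\max_v \deg(v)\cdot p'$, where $p'$ is the smallest admissible prime) the difference would survive reduction modulo any sufficiently large prime $p_1$, contradicting $\mathbb{Z}_{p_1}$-magicness of that same labeling. Your argument never assumes the labelings for different primes are related: you encode magicness as the homogeneous system $N(\mathbf{x},\mu)^{T}=0$ with $N=[\,M\mid -\mathbf{1}\,]$, invoke the fact that a $\mathbb{Q}$-vector space is not a finite union of proper subspaces to reduce failure to the vanishing of one coordinate functional on $\ker_{\mathbb{Q}}N$, identify that functional with a rational combination of the rows via kernel/row-space duality, and then defeat it by reducing modulo a prime $p\ge m$ that avoids the cleared denominator $d$, where the hypothesis supplies a kernel vector with that coordinate nonzero. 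What each approach buys: the paper's proof is shorter and entirely elementary, but it genuinely depends on the ``same labeling for all $p$'' reading of the hypothesis together with boundedness of the lifted weights; yours proves a strictly stronger statement --- the $\mathbb{Z}_p$-labelings may vary with $p$, and in fact only infinitely many primes (not all $p\ge m$) are needed --- at the cost of the union-of-subspaces lemma and the duality/denominator-dodging machinery, all of which you deploy correctly.
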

\begin{proof}
Let us assume $l$ is $\mathbb{Z}_p$-vertex magic labeling of $G$, for all prime $p \ge m$. Clearly, $l$ is $\mathbb{Z}_{p'}$-vertex magic, where $p'= min\{p : p \ge m\}$. Then $l(v_i)=k_ia,$ where $k_i \in \mathbb{Z}$ and $o(a)=p'$. Now, define $l' : V(G) \to \mathbb{Z}\setminus \{0\}$ by $l'(v_i) =k_ia$, here $a$ is generator of $\mathbb{Z}$. 
Suppose $l'$ is not a $\mathbb{Z}$-vertex magic labeling of $G$, then there exists $v_i,v_j \in V(G)$ such that $w(v_i) \neq w(v_j)$, which implies $w(v_i) \neq w(v_j)$ is also true for all $\mathbb{Z}_{p_1}$, where $p_1$ is a prime number, such that ${p_1}>$ max$\{deg(v_i) : v_i \in V(G)\}\cdot p'$, which is a contradiction. 
\end{proof} 

Let $G$ be a simple graph. In \cite{Anderson} and \cite{Vinoth}, the authors define the following relation on $V(G)$. For any two vertices $u,v \in V(G)$, define $u\sim_G v$ if and only if $N_G(u) = N_G(v)$. It is easy to see that, the relation $\sim_G$ is an equivalence relation on $V(G)$. Let $[u]$ be the equivalence class which contains $u$ and $S$ be the set of all equivalence classes of this relation $\sim_G$. The reduced graph $H$ of $G$ is defined as follows. The \emph{reduced graph} $H$ of $G$ is the graph with vertex set $V(H) = S$ and  two distinct vertices $[u]$ and $[v]$ are adjacent in $H$ if and only if $u$ and $v$ are adjacent in $G$. Note that, if $V(H) = \{[u_1],[u_2],\ldots,[u_k]\}$, then $G$ is the $H$-join of $\langle[u_1]\rangle, \langle[u_2]\rangle,\ldots,\langle[u_k]\rangle$, that is, $G \cong H[\langle[u_1]\rangle, \langle[u_2]\rangle,\ldots,\langle[u_k]\rangle]$  ($\langle [u] \rangle$ denote the subgraph induced by $[u]$)
and each $[u_i]$ is an independent subset 
of $G$, we take $|[u_i]|=m_i$, where $m_i \in \mathbb{N}$ for each $i$. Clearly, $H$ is isomorphic to a subgraph of $G$ induced by $\{u_1,u_2,\ldots, u_k\}$. 

\begin{theorem}\label{BP2003}
If the reduced graph $H$ of $G$ is $A$-vertex magic, where $|A| > 2$ then $G$ is $A$-vertex magic. 
\end{theorem}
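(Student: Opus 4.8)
The plan is to lift an $A$-vertex magic labeling of the reduced graph $H$ to one of $G$ by exploiting the $H$-join structure $G \cong H[\langle[u_1]\rangle,\ldots,\langle[u_k]\rangle]$. Let $l_H : V(H) \to A\setminus\{0\}$ be an $A$-vertex magic labeling with magic constant $\mu_H$, so that $\sum_{[u_j]\in N_H([u_i])} l_H([u_j]) = \mu_H$ for every $i$. Since each class $[u_i]$ is an independent set and, in the $H$-join, every vertex of $[u_i]$ is adjacent in $G$ precisely to the vertices lying in the classes $[u_j]$ with $[u_i][u_j]\in E(H)$, the open neighbourhood of any $v\in[u_i]$ is $N_G(v)=\bigcup_{[u_j]\in N_H([u_i])}[u_j]$. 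Hence, writing $S_j=\sum_{x\in[u_j]}l(x)$ for the sum of the labels assigned inside class $[u_j]$, the weight $w(v)=\sum_{[u_j]\in N_H([u_i])}S_j$ is already constant on each class; the task is only to make it constant across classes.

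First I would observe that it suffices to distribute, within each class $[u_i]$ (which has $m_i$ vertices), nonzero labels whose sum equals $l_H([u_i])$. Indeed, if $S_i = l_H([u_i])$ for every $i$, then $w(v)=\sum_{[u_j]\in N_H([u_i])}l_H([u_j])=\mu_H$ for every $v\in V(G)$, so $l$ is an $A$-vertex magic labeling of $G$ with magic constant $\mu_H$. This reduces the theorem to the following purely group-theoretic claim: for every $t\in A\setminus\{0\}$ and every $m\in\mathbb{N}$ there exist $a_1,\ldots,a_m\in A\setminus\{0\}$ with $a_1+\cdots+a_m=t$.

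To prove the claim I would argue by cases on $m$. For $m=1$ take $a_1=t$. For $m\ge 2$ it is enough to produce nonzero $a_1,\ldots,a_{m-1}$ whose partial sum $s$ satisfies $s\neq t$, since then $a_m=t-s$ is automatically nonzero. When $m-1=1$ this amounts to choosing a nonzero element different from $t$, which exists because $|A|>2$ guarantees at least two nonzero elements. When $m-1\ge 2$, fixing two distinct nonzero elements $a\neq b$ (again available since $|A|>2$), the two admissible partial sums $(m-1)a$ and $(m-2)a+b$ differ by $a-b\neq 0$, so at least one of them avoids $t$. This settles the claim, and with it the theorem, noting that labels may repeat freely since $l$ is not required to be injective.

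The main obstacle is exactly this last lemma, and it is the only place where the hypothesis $|A|>2$ is essential: for $A\cong\mathbb{Z}_2$ each class would have to be labelled with copies of the unique nonzero element, forcing $S_i$ to depend only on the parity of $m_i$, which cannot in general be matched to an arbitrary target $l_H([u_i])$. I expect the verification that the neighbourhood of a vertex decomposes as a union of whole classes — so that the weight is constant on each class by construction — to be routine given the reduced-graph description, and thus the real content of the argument lies in the existence of the prescribed class-sums.
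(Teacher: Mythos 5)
Your proposal is correct and follows essentially the same route as the paper: both exploit the $H$-join structure $G \cong H[\langle[u_1]\rangle,\ldots,\langle[u_k]\rangle]$ to reduce the theorem to distributing nonzero labels within each equivalence class so that the class sum equals that class's label in $H$, whence every vertex inherits the magic constant of $H$. The only difference is in how that distribution lemma is established --- the paper writes down explicit alternating labels ($a_i, a, -a, a, -a, \ldots$, with a parity case split, in the spirit of its Lemma \ref{BP2011}), whereas you use a short pigeonhole argument on the two candidate partial sums $(m-1)a$ and $(m-2)a+b$ --- but both are valid and the surrounding argument is identical.
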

\begin{proof}
Assume that $H$ is $A$-vertex magic with the corresponding labeling $l$ such that $l([u_i])= a_i$, where $a_i \in A \setminus \{0\}$ and $[u_i]=\{u_{1}^i,\ldots,u_{m_i}^i\}$, for $i = 1,\ldots,k$.
Define the label on $V(G)$ as follows, suppose $m_i$ is odd, then define \[l'(u_{j}^i)= \begin{cases} \text{$a_i$} &\quad\text{if $j = 1$}\\ \text{$a$} &\quad\text{if $j>1$ and $j$ is odd}\\ \text{$-a$} &\quad\text{if $j$ is even,}\\ \end{cases} \] for some $a \in A\setminus \{0\}$. Suppose $m_i$ is even, then define \[l'(u_{j}^i)= \begin{cases} \text{$a_i+a$} &\quad\text{if $j = 1$}\\  \text{$a$} &\quad\text{if $j>1$ and $j$ is odd}\\ \text{$-a$} &\quad\text{if $j$ is even,}\\ \end{cases} \]where $a_i \neq -a$. Let $v \in V(G)$. Then $v \in [u_i]$, for some $i$. As $w(v)$(corresponding label $l'$ in $G$) is equal to $w([u_i])$(corresponding label $l$ in $H$), $G$ is $A$-vertex magic. 
\end{proof}
The following result proved in \cite{Bala1} is a consequence of Theorem \ref{BP2003}.

\begin{corollary}\label{BP2013}(Theorem 3, \cite{Bala1} )
Let $G = K_{{n_1},{n_2},\ldots,{n_m}}$ be a complete $m$-partite graph. Then $G$ is $A$-vertex magic, where $|A|>2$.
\end{corollary}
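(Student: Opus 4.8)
The plan is to derive the statement directly from Theorem \ref{BP2003} by identifying the reduced graph of a complete multipartite graph. Write $V(G) = V_1 \cup V_2 \cup \cdots \cup V_m$ for the $m$ parts, with $|V_i| = n_i$, where two vertices are adjacent precisely when they lie in distinct parts (for connectivity we have $m \ge 2$, which causes no trouble). First I would compute the equivalence classes of the relation $\sim_G$. For $u \in V_i$ we have $N_G(u) = V(G) \setminus V_i$, which depends only on the part $V_i$ and not on the particular vertex $u$; hence any two vertices of the same part are related. Conversely, if $u \in V_i$ and $v \in V_j$ with $i \neq j$, then $v \in N_G(u)$ while $v \notin N_G(v)$ (open neighborhoods exclude the vertex itself), so $N_G(u) \neq N_G(v)$. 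Thus the equivalence classes of $\sim_G$ are exactly $V_1, \ldots, V_m$, each an independent set, and the reduced graph $H$ has one vertex $[V_i]$ per part with $[V_i]$ adjacent to $[V_j]$ if and only if $i \neq j$. Therefore $H \cong K_m$.

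Second I would verify that $K_m$ is $A$-vertex magic for every non-trivial Abelian group $A$. Fix any $a \in A \setminus \{0\}$ and label every vertex by $a$; since every vertex of $K_m$ is adjacent to the remaining $m-1$ vertices, we get $w(v) = (m-1)a$ for all $v$, a constant, so the labeling is $A$-vertex magic with magic constant $(m-1)a$. (One may also observe that in $K_m$ the constant labeling is essentially forced: writing $S$ for the sum of all labels, $w(v) = S - l(v)$, so constancy of $w$ is equivalent to constancy of $l$.) Applying Theorem \ref{BP2003} to $G$, whose reduced graph $H \cong K_m$ is $A$-vertex magic, then yields that $G = K_{n_1,n_2,\ldots,n_m}$ is $A$-vertex magic whenever $|A| > 2$.

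There is no deep obstacle in this argument; the only point requiring care is the correct description of $\sim_G$ on a complete multipartite graph, specifically checking that distinct parts yield distinct open neighborhoods by exhibiting a vertex lying in its own non-neighborhood. It is worth emphasizing that the hypothesis $|A| > 2$ is not needed to make $K_m$ magic—the constant labeling above succeeds for any non-trivial $A$—but is inherited from Theorem \ref{BP2003}, which is the sole engine lifting magicness from the reduced graph to $G$.
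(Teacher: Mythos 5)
Your proposal is correct and follows exactly the paper's route: identify the $\sim_G$-classes as the parts so that the reduced graph is $K_m$, observe $K_m$ is $A$-vertex magic (constant labeling), and invoke Theorem \ref{BP2003}. The paper's own proof is a two-line version of this; your added verifications (distinct parts have distinct open neighborhoods, and the remark that $|A|>2$ is inherited from Theorem \ref{BP2003} rather than needed for $K_m$ itself) are accurate but not a different method.
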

\begin{proof}
By the relation $\sim_G$ on $V(G)$, we see that $K_m$ is the reduced graph  of $G$. Since $K_m$ is $A$-vertex magic, where $|A|>2$ and hence the result follows.
\end{proof}

The following  result proved in  \cite{Sabeel} is an immediate consequence of Corollary \ref{BP2013}.

\begin{corollary}(Corollary 2.4, \cite{Sabeel}) For any finite Abelian group $A$ with $|A| \geq 3$, all tree of diameter $2$ are $A$-vertex magic.  
\end{corollary}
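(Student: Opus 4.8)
The plan is to recognize every diameter-$2$ tree as a complete bipartite graph and then invoke Corollary~\ref{BP2013} directly. The whole argument hinges on a single structural observation, after which the conclusion is immediate; no new labeling needs to be constructed.

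First I would establish that a tree of diameter exactly $2$ is precisely a star $K_{1,n}$ for some $n \geq 2$. To verify this, fix a tree $T$ of diameter $2$ together with a longest path $u$-$c$-$w$ (which has length $2$). I claim every vertex $x \neq c$ is adjacent to $c$: since $T$ is a tree there is a unique $x$-$c$ path, and if this path had length $\geq 2$, then $x$ would lie at distance $\geq 3$ from $u$ or from $w$, contradicting $\mathrm{diam}(T) = 2$. Hence every non-central vertex is a leaf joined to $c$, so $T \cong K_{1,n}$ with $n = d_T(c) \geq 2$.

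Next I would observe that $K_{1,n}$ is exactly the complete bipartite graph on the parts $\{c\}$ and the set of $n$ leaves, that is, a complete $2$-partite graph $K_{n_1,n_2}$ with $n_1 = 1$ and $n_2 = n$. Applying Corollary~\ref{BP2013} with $m = 2$, and noting that the hypothesis $|A| \geq 3$ coincides with $|A| > 2$, we conclude that $T$ is $A$-vertex magic.

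The main (indeed the only nontrivial) step is the combinatorial identification of diameter-$2$ trees with stars; this is elementary but must be argued explicitly, since the remainder of the proof is a one-line specialization of the already-proved complete multipartite case. I do not anticipate a genuine obstacle here—the substance of the result is entirely inherited from Corollary~\ref{BP2013}, and the work lies solely in stating the structural characterization cleanly.
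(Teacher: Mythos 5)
Your proof is correct and takes essentially the same route as the paper: the paper presents this corollary as an immediate consequence of Corollary~\ref{BP2013}, implicitly using the fact that a tree of diameter $2$ is a star $K_{1,n}$, i.e., a complete bipartite graph. Your explicit verification of the star characterization simply fills in the one structural detail the paper leaves unstated.
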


The following corollaries are an immediate consequence of Theorem \ref{BP2003}.
\begin{corollary}
If any connected graph $G$ has at most three equivalence classes under the relation $\sim_G$, then the graph $G$ is $A$-vertex magic, where $|A|>2$.
\end{corollary}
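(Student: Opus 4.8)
The plan is to deduce the statement directly from Theorem~\ref{BP2003} together with Observation~\ref{BP2007}. By construction, the vertices of the reduced graph $H$ of $G$ are exactly the equivalence classes of the relation $\sim_G$ on $V(G)$, so the hypothesis that $G$ has at most three such classes says precisely that $|V(H)| \le 3$. Thus it suffices to show that $H$ is a connected graph on at most three vertices, apply Observation~\ref{BP2007} to conclude that $H$ is $A$-vertex magic whenever $|A| > 2$, and then invoke Theorem~\ref{BP2003} to transfer this magicness back to $G$.

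The one point that requires a short argument is the connectivity of $H$. First I would record that each equivalence class $[u]$ is an independent set in $G$: if two adjacent vertices $x,y$ were to lie in the same class, then $N_G(x) = N_G(y)$ would force $y \in N_G(x) = N_G(y)$, i.e.\ $y$ would be adjacent to itself, which is impossible in a simple graph. Consequently, any two adjacent vertices of $G$ lie in distinct classes and hence determine an edge of $H$. Given two classes $[u]$ and $[v]$, I would pick representatives and a path from $u$ to $v$ in $G$, which exists since $G$ is connected; projecting each vertex of this path to its equivalence class yields a walk in $H$ joining $[u]$ to $[v]$, so $H$ is connected.

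With connectivity in hand, Observation~\ref{BP2007} applies to $H$ since $|V(H)| \le 3$ and $|A| > 2$, giving that $H$ is $A$-vertex magic. Theorem~\ref{BP2003} then yields that $G$ is $A$-vertex magic, which completes the argument. I do not expect any genuine obstacle here: the only non-bookkeeping step is the independence of the equivalence classes, and this follows immediately from the definition of $\sim_G$ in a simple graph; the remainder is a direct chaining of the two earlier results.
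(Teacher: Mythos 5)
Your proof is correct and takes the same route as the paper, which simply cites Theorem~\ref{BP2003} together with Observation~\ref{BP2007}. The only difference is that you explicitly verify the connectivity of the reduced graph $H$ (needed for Observation~\ref{BP2007} to apply), a detail the paper leaves implicit.
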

\begin{proof}
By  Theorem \ref{BP2003}  and observation \ref{BP2007}, we get the required result.
\end{proof}

\begin{corollary}
Let $G =K_{{n_1},{n_2},\ldots,{n_m}}$ be a complete $m$-partite graph. The $G+K_n$ is $A$-vertex magic.
\end{corollary}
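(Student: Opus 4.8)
The plan is to apply Theorem \ref{BP2003} by identifying the reduced graph of $G + K_n$ under the equivalence relation $\sim_{G+K_n}$. First I would fix notation for the join: write $G = K_{n_1,\ldots,n_m}$ with partite sets $V_1,\ldots,V_m$ (so $|V_i| = n_i$) and let $W = \{w_1,\ldots,w_n\}$ be the vertex set of $K_n$. In $G + K_n$ every vertex of $W$ is adjacent to every other vertex of the graph (the $w_s$ are mutually adjacent and each is joined to all of $G$), while two vertices of $G$ are adjacent precisely when they lie in distinct parts $V_i$.

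Next I would compute the classes of $\sim_{G+K_n}$. For two vertices $u,u'$ in the same part $V_i$, the open neighborhoods coincide, since $N(u) = N(u') = (V(G)\setminus V_i)\cup W$, so each $V_i$ lies in a single class. On the other hand, for distinct $w_s,w_t \in W$ we have $w_t \in N(w_s)$ but $w_t \notin N(w_t)$, whence $N(w_s) \neq N(w_t)$; thus no two vertices of $K_n$ are equivalent, and no $w_s$ is equivalent to a vertex of $G$ (a part vertex is non-adjacent to its own part, whereas $w_s$ is adjacent to everything but itself). Consequently the equivalence classes are exactly $V_1,\ldots,V_m$ together with the $n$ singletons $\{w_1\},\ldots,\{w_n\}$, for a total of $m+n$ classes, and since every pair of these classes is adjacent the reduced graph of $G+K_n$ is the complete graph $K_{m+n}$.

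The final step is to note that $K_{m+n}$ is $A$-vertex magic: labelling every vertex by a common element $a \in A\setminus\{0\}$ gives $w(v) = (m+n-1)a$ at every vertex, a constant, so this is a valid labeling with magic constant $\mu = (m+n-1)a$. Theorem \ref{BP2003} then delivers that $G + K_n$ is $A$-vertex magic whenever $|A| > 2$.

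The only delicate point—the mild main obstacle—is the reduced-graph computation, specifically verifying that the vertices of $K_n$ neither merge with one another nor with the parts of $G$; this rests squarely on the open-neighborhood convention that a vertex is not its own neighbor, which keeps the $w_s$ as distinct singleton classes and forces the extra factor that makes the $|A|>2$ hypothesis genuinely necessary (for $A=\mathbb{Z}_2$ one sees directly that the degrees need not share a common parity). An alternative, equally short route bypasses the reduction: the join of two complete multipartite graphs is again complete multipartite, so $G + K_n \cong K_{n_1,\ldots,n_m,1,\ldots,1}$ with $n$ trailing ones, and the conclusion follows at once from Corollary \ref{BP2013}.
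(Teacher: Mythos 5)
Your proposal is correct and takes essentially the same approach as the paper: the paper's proof just observes that the reduced graph of $G+K_n$ is regular and invokes Theorem \ref{BP2003}, while you carry out that reduction explicitly, identifying the reduced graph as $K_{m+n}$ and exhibiting the constant labeling (and correctly noting the implicit hypothesis $|A|>2$). Your alternative closing route via Corollary \ref{BP2013}, writing $G+K_n \cong K_{n_1,\ldots,n_m,1,\ldots,1}$, is also valid but amounts to the same underlying argument, since that corollary is itself a reduced-graph application of Theorem \ref{BP2003}.
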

\begin{proof}
The reduced graph $H$ of $G+K_n$ is regular graph. By Theorem \ref{BP2003}, the result follows.
\end{proof}

\begin{theorem}\label{BP2008}
Let $H$ be a simple graph on $k$ vertices and let $G_j ={K_{n_j}}^c$, for all $j= 1,2,\ldots,k$. The graph $G = H[G_1,G_2,\ldots,G_k]$ is $A$-vertex magic, where $|A|>2$, if $n_j >1$, for each $j=1,2,\ldots,k$.
\end{theorem}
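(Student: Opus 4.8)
The plan is to exploit the fact that each $G_j = K_{n_j}^c$ is an independent set of size $n_j \ge 2$, which grants us complete freedom over the sum of the labels inside each block. First I would record the structure of $G = H[G_1,\dots,G_k]$: its vertex set partitions into the blocks $V(G_1),\dots,V(G_k)$, no two vertices of a single block are adjacent (each $G_j$ is edgeless), and for $v \in V(G_j)$ the open neighbourhood is $N_G(v)=\bigcup_{i:\,v_iv_j\in E(H)} V(G_i)$, which is the \emph{same} set for every vertex of the block $V(G_j)$. Consequently, writing $S_i = \sum_{u\in V(G_i)} l(u)$ for the $i$-th block sum, the weight of any $v\in V(G_j)$ is
\[
w(v) \;=\; \sum_{i:\,v_iv_j\in E(H)}\; \sum_{u\in V(G_i)} l(u) \;=\; \sum_{i:\,v_iv_j\in E(H)} S_i,
\]
which depends only on the index $j$, not on the particular vertex $v$ chosen in the block.

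Thus $G$ becomes $A$-vertex magic as soon as we can choose the block sums $S_1,\dots,S_k$ so that $\sum_{i:\,v_iv_j\in E(H)}S_i$ equals one common element $\mu\in A$ for every $j$. Since $H$ is an arbitrary simple graph, the reduced graph of $G$ (a twin-quotient of $H$) need not itself be $A$-vertex magic, so Theorem \ref{BP2003} does not apply directly; instead I would use the multiplicities $n_j\ge 2$. The key point is that, because $n_j\ge 2$, the block sum $S_j$ can be \emph{prescribed} to be any element of $A$ whatsoever while all labels stay in $A\setminus\{0\}$. Choosing the simplest common target, I would set every block sum equal to $0$; then $w(v)=0$ for all $v$, so $G$ is $A$-vertex magic with magic constant $\mu=0$.

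It therefore remains to establish the enabling step: for any $n\ge 2$ and any Abelian group $A$ with $|A|>2$, there exist $x_1,\dots,x_n\in A\setminus\{0\}$ with $x_1+\cdots+x_n=0$. For $n$ even I would fix a nonzero $a$ and use $n/2$ cancelling pairs $(a,-a)$. For $n$ odd I would peel off $(n-3)/2$ such pairs, reducing to $n=3$, where I choose $b\in A\setminus\{0\}$ and then $c\in A\setminus\{0,-b\}$ (possible precisely because $|A|\ge 3$), so that $b,c,-(b+c)$ are all nonzero and sum to $0$. This is the only place where the hypothesis $|A|>2$ is consumed, and it is genuinely needed: over $\mathbb{Z}_2$ an odd block cannot carry nonzero labels summing to $0$. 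I expect this zero-sum labelling of the odd-sized blocks to be the sole real obstacle; once it is available, concatenating the block labellings yields the desired labelling of $G$ and completes the argument.
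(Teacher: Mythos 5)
Your proposal is correct and follows essentially the same route as the paper: both arguments observe that every vertex's weight is the sum of the block sums over its $H$-neighbours, and then label each independent block $K_{n_j}^c$ with nonzero elements summing to $0$ (even blocks via cancelling pairs $\pm a$, odd blocks via a small adjustment needing $|A|>2$), giving magic constant $\mu=0$. The only cosmetic difference is your odd-block construction (pairs plus a triple $b,\,c,\,-(b+c)$) versus the paper's explicit list $a+b,\,-b,\,-a,\,a,\,-a,\ldots$ with $a\neq -b$, which are interchangeable.
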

\begin{proof}
Let $V(G)= \bigcup_{j=1}^k V(K_{n_j}^c)$ and $v^j_i \in V(K_{n_j}^c)$, where $i = 1, 2,\ldots,n_j$. Assume that each $n_j >1$. Suppose $n_j$ is odd, define $l : V(G) \to A\setminus \{0\}$ by \[ l(v_i^j)=\begin{cases} \text{$a+b$} &\quad\text{if $i=1$}\\ \text{$-b$} &\quad\text{if $i=2$}\\ \text{$-a$} &\quad\text{if $i>1$ and $i$ is odd} \\ \text{$a$} &\quad\text{if $i>2$ and $i$ is even,}\\ \end{cases} \] where $a \neq -b$.\\ Suppose $n_j$ is even, define $l : V(G) \to A\setminus \{0\}$ by \[ l(v_i^j)=\begin{cases} \text{$a$} &\quad\text{if $i$ is odd}\\ \text{$-a$} &\quad\text{if $i$ is even.}\\ \end{cases} \] Thus $w(v) =0,$ for all $v \in \displaystyle V(G)$.
\end{proof} 

The following result proved in \cite{Bala1} is an immediate consequence of Theorem \ref{BP2008}.

\begin{corollary}(Theorem 15, \cite{Bala1}) Let $G$ be any simple graph on $n$ vertices. The graph $G[{K_m}^c]$ is $A$-vertex magic, $|A|>2$ if and only if $m>1$.

\end{corollary}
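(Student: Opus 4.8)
The plan is to derive the corollary directly from Theorem~\ref{BP2008}, reading the biconditional as a statement quantified over all simple graphs $G$: namely that $G[K_m^c]$ is $A$-vertex magic for \emph{every} simple graph $G$ if and only if $m>1$.

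For the sufficiency I would recognise $G[K_m^c]$ as a special case of the $H$-join. Taking $G$ itself as the base graph on $k=n$ vertices and setting $G_j=K_{n_j}^c$ with $n_j=m$ for each $j$, the definition of the lexicographic product preceding the statement gives $G[K_m^c]\cong G[G_1,\ldots,G_k]$. Since $m>1$ forces every $n_j>1$, the hypotheses of Theorem~\ref{BP2008} hold verbatim, so $G[K_m^c]$ is $A$-vertex magic with magic constant $0$. No new labeling is needed; the only work is to match $G[K_m^c]$ to the $H$-join form.

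For the necessity I would produce a witness at $m=1$. Since $K_1^c=K_1$, replacing each vertex of $G$ by $K_1^c$ changes nothing, so $G[K_1^c]\cong G$ and the product collapses to $G$ itself. It then suffices to name one simple graph that is not $A$-vertex magic, and $P_4$, already flagged in the excerpt, serves: $P_4[K_1^c]=P_4$ is not $A$-vertex magic, so $m>1$ is necessary. The only point demanding care is the quantification: for a single fixed $G$ the equivalence fails, since some graphs are themselves $A$-vertex magic, so I would state explicitly that the ``only if'' is the universal claim breaking down at $m=1$. Beyond this there is no genuine obstacle, the result being a corollary of Theorem~\ref{BP2008}.
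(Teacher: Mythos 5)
Your proposal is correct, and for the substantive direction it is exactly the paper's route: the paper states this corollary with no proof at all, declaring it an immediate consequence of Theorem~\ref{BP2008}, which is precisely your sufficiency argument (take $H=G$, all $n_j=m>1$, so the $H$-join hypothesis holds and the magic constant is $0$). Where you go beyond the paper is the ``only if'' direction, which Theorem~\ref{BP2008} cannot supply since it is a one-way implication: you correctly observe that for a fixed $G$ the biconditional is simply false (any $A$-vertex magic $G$, e.g.\ $C_4$, satisfies $G[K_1^c]\cong G$ $A$-vertex magic with $m=1$), that the statement must therefore be read with the quantifier over $G$ inside the left-hand clause, and that necessity then follows from the single witness $P_4[K_1^c]\cong P_4$, which the paper itself flags as non-$A$-vertex magic. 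This explicit treatment of the quantification and of the $m=1$ case fills in exactly what the paper (and its citation of Theorem~15 of \cite{Bala1}) leaves implicit, so your write-up is, if anything, more complete than the source.
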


Consider the zero-divisor graph $\Gamma(R)$, where $R$ is a finite commutative reduced
ring with unity. It is well-known that such a ring has to be a product of finite fields \cite{Vinoth}. That is, there exist $k$ finite fields $F_{q_1},F_{q_2},\ldots,F_{q_k}$ such that $R \cong F_{q_1}\times F_{q_2}\times \ldots \times F_{q_k}$, where ${q_i}'s$ are prime powers, say $q_i = p_i^{n_i}$, $n_i \in \mathbb{N}$. 

\begin{corollary}
Let $q_i\geq 3$ be a prime power, for $1 \leq i \leq k$ and $R \cong F_{q_1}\times F_{q_2}\times \ldots \times F_{q_k}$. Then the graph $\Gamma(R)$ is $A$-vertex magic, where $|A| >2$.
\end{corollary}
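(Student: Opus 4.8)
The plan is to realize $\Gamma(R)$ as an $H$-join of edgeless graphs and then invoke Theorem \ref{BP2008}. Writing $R = F_{q_1}\times\cdots\times F_{q_k}$, I would first describe the vertices and adjacency combinatorially. An element $x=(x_1,\ldots,x_k)$ is a zero-divisor precisely when $x\neq 0$ and at least one coordinate vanishes: if every $x_i\neq 0$ then, each $F_{q_i}$ being a field, $x$ is a unit. Associating to each $x\in Z(R)$ its \emph{zero-pattern} $Z(x)=\{i : x_i=0\}$, a nonempty proper subset of $[k]=\{1,\ldots,k\}$, the product $xy$ vanishes iff $x_iy_i=0$ for every $i$, that is, iff $Z(x)\cup Z(y)=[k]$. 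Hence adjacency in $\Gamma(R)$ is governed entirely by the zero-patterns of the endpoints.

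Next I would group the vertices by zero-pattern. For a fixed nonempty proper $S\subsetneq[k]$, set $C_S=\{x\in Z(R): Z(x)=S\}$, which has cardinality $m_S=\prod_{i\notin S}(q_i-1)$. Two vertices in the same class $C_S$ satisfy $Z(x)\cup Z(y)=S\neq[k]$, so $C_S$ is an independent set, i.e.\ it induces $K_{m_S}^c$; and the adjacency between a vertex of $C_S$ and a vertex of $C_{S'}$ with $S\neq S'$ is determined solely by whether $S\cup S'=[k]$. Letting $H$ be the simple graph whose vertices are the nonempty proper subsets of $[k]$ with $S$ adjacent to $S'$ iff $S\cup S'=[k]$, this exhibits $\Gamma(R)$ as the $H$-join of the family $\{K_{m_S}^c\}_S$; in fact this $H$ is the reduced graph of $\Gamma(R)$ in the sense of the excerpt.

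The crux is the size estimate for each part. Since $S$ is a proper subset of $[k]$ its complement is nonempty, so the product defining $m_S$ has at least one factor; and because each $q_i\geq 3$, every such factor satisfies $q_i-1\geq 2$. Therefore $m_S\geq 2>1$ for every class. (Here I take $k\geq 2$, so that $Z(R)$ is nonempty and proper patterns exist; for $k=1$ the field $R$ has no zero-divisors and $\Gamma(R)$ is empty, so the statement holds vacuously.) With every part $K_{m_S}^c$ of order at least two, Theorem \ref{BP2008} applies directly and produces an $A$-vertex magic labeling of $\Gamma(R)$ for every $A$ with $|A|>2$.

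I do not expect a genuine obstacle here; the only point requiring care is the bookkeeping that identifies the $\sim_{\Gamma(R)}$-classes with zero-patterns and confirms each class has at least two elements, and the hypothesis $q_i\geq 3$ is exactly what guarantees the latter. The degenerate case $k=1$ should be flagged explicitly so that the corollary is covered without exception.
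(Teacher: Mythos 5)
Your proposal is correct and follows essentially the same route as the paper: the paper's proof simply observes that every vertex of $\Gamma(R)$ has a twin (so each $\sim_G$-class has at least two elements, each class being independent) and invokes Theorem \ref{BP2008}, which is exactly your argument with the ``easy observation'' fleshed out via zero-patterns, the count $m_S=\prod_{i\notin S}(q_i-1)\geq 2$, and the explicit $H$-join structure. Your added care (the zero-pattern bookkeeping and flagging the degenerate case $k=1$) is a sound elaboration rather than a different method.
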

\begin{proof}
An easy observation, if $v \in V(\Gamma(R))$, then there exists a vertex $u\in V(\Gamma(R))$ such that $u \neq v$ and $N(u)=N(v)$. Therefore, each equivalence class ($\sim_G$) contains at least two element in $V(\Gamma(R))$. By Theorem $\ref{BP2008}$, $R$ is $A$-vertex magic, where $|A| >2$.
\end{proof}

\begin{definition}
Let $G$ be a graph and let $H$ be its reduced graph. We say that $H$ is $A'$ magic if there exists a labeling $l:V(H)\to A$ such that $0$ can also be used to label the vertices $[u]\in V(H)$
whenever $|[u]|\geq 2$ and $\displaystyle \sum_{[u]\in N([v])} l([u]) = \mu$, for all $[v] \in V(H)$.
\end{definition}

\begin{lemma}\label{BP2011}
Let $A$ be an Abelian group with at least three elements. If $n \geq 2$ and $a \in A $, then there exists $a_1,a_2,\ldots,a_n$ in $A\setminus \{0\}$ such that $a = \sum_{i=1}^n a_i$. \end{lemma}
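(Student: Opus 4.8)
The plan is to argue by induction on $n \geq 2$, with the hypothesis $|A| \geq 3$ entering only in the base case. For the base case $n = 2$, I want to write $a = a_1 + a_2$ with $a_1, a_2 \in A \setminus \{0\}$. I would choose $a_1 = b$ to be any element of $A \setminus \{0, a\}$ and set $a_2 = a - b$; then $a_1 \neq 0$ by the choice of $b$, and $a_2 = a - b \neq 0$ precisely because $b \neq a$. The only point to check is that $A \setminus \{0, a\}$ is nonempty. If $a = 0$, this set equals $A \setminus \{0\}$, which has $|A| - 1 \geq 2$ elements; if $a \neq 0$, it has $|A| - 2 \geq 1$ elements. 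In either case it is nonempty exactly because $|A| \geq 3$, so a suitable $b$ exists.

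For the inductive step, assume the statement holds for some $n \geq 2$ and fix an arbitrary $a \in A$. Since $|A| \geq 3$, the set $A \setminus \{0\}$ is nonempty, so I may pick any $c \in A \setminus \{0\}$. Applying the induction hypothesis to the element $a - c$ produces $a_1, \ldots, a_n \in A \setminus \{0\}$ with $a - c = \sum_{i=1}^n a_i$; setting $a_{n+1} = c$ then gives $a = \sum_{i=1}^{n+1} a_i$ with every summand nonzero, which completes the induction. Note that this step is essentially automatic: the entire content of the lemma is carried by the base case, since peeling off a single nonzero summand reduces the problem from $n+1$ to $n$ without any new hypothesis.

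I do not expect a genuine obstacle here; the one substantive observation is that $A \setminus \{0, a\}$ is nonempty exactly when $|A| \geq 3$, which is what drives the base case. I would also remark that the hypothesis $|A| \geq 3$ cannot be weakened: in the two-element group $\{0, g\}$ one has $g + g = 0$, so for $n = 2$ the element $g$ admits no representation as a sum of two nonzero elements, showing the bound is sharp.
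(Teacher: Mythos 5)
Your proof is correct, but it takes a genuinely different route from the paper. The paper's proof is a direct, closed-form construction split into four cases ($a=0$ or $a\neq 0$, crossed with $n$ odd or even): the summands are an alternating string of $b$ and $-b$ (or $a$ and $-a$), with a two-term correction $b+c,\,-c$ (requiring $b\neq -c$, which is where $|A|\geq 3$ enters) to fix the parity mismatch. You instead induct on $n$: the base case $n=2$ picks $b\in A\setminus\{0,a\}$ (nonempty exactly because $|A|\geq 3$; your count $|A|-2\geq 1$ tacitly assumes $A$ finite, but the infinite case is trivially fine) and sets $a_1=b$, $a_2=a-b$, while the step peels off one arbitrary nonzero summand $c$ and applies the hypothesis to $a-c$. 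Both arguments are sound; yours is shorter, isolates the role of $|A|\geq 3$ in a single spot, and your closing observation that the two-element group gives a sharp counterexample ($g$ is not a sum of two nonzero elements in $\{0,g\}$) is a worthwhile addition the paper does not make. What the paper's explicit version buys is uniform control over the multiset of labels used --- the summands take at most three distinct values, namely $\pm b$ plus one correction term --- which matches the alternating $a,-a$ labeling patterns the authors deploy directly in Theorems \ref{BP2003} and \ref{BP2008}; your inductive choices could in principle produce $n$ distinct values, which is harmless for the lemma as stated but less convenient as a template for those labeling constructions.
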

\begin{proof}
Consider the following cases. 
\item[\bf Case 1.]$a=0$\\
Let $b,c \in A\setminus \{0\}$ and $b \neq -c$.
If $n$ is odd, define  \[a_i = \begin{cases} \text{$b+c$} &\quad\text{if $i=1$}\\ \text{$-c$} &\quad\text{if $i=2$}\\ \text{$-b$} &\quad\text{if $i>1$ and $i$ is odd}\\ \text{$b$} &\quad\text{if $i>2$ and $i$ is even.}\\ \end{cases} \] 
If $n$ is even, define \[a_i = \begin{cases} \text{$b$} &\quad\text{if $i$ is odd}\\ \text{$-b$} &\quad\text{if $i$ is even.}\\ \end{cases} \]
\item[\bf Case 2.]$a \neq 0$\\
Let $a,b \in A\setminus \{0\}$ and $a \neq -b$.
If $n$ is odd, define \[a_i = \begin{cases} \text{$a$} &\quad\text{if $i$ is odd}\\ \text{$-a$} &\quad\text{if $i$ is even.}\\ \end{cases} \]
If $n$ is even, define  \[a_i = \begin{cases} \text{$a+b$} &\quad\text{if $i=1$}\\ \text{$-b$} &\quad\text{if $i=2$}\\ \text{$-a$} &\quad\text{if $i>1$ and $i$ is odd}\\ \text{$a$} &\quad\text{if $i>2$ and $i$ is even.}\\ \end{cases} \] 
\end{proof}

The following result proved in $\cite{Sabeel}$ is an immediate consequence of Lemma $\ref{BP2011}$.

\begin{corollary}(Lemma 2.1 in \cite{Sabeel})
Let $A$ be a finite Abelian group with $|A|\geq 3$ and let $g \in A$. Then, for each $n \geq 2$. there exist $a_1,a_2,\ldots,a_n$ in $A \setminus \{0\}$ such that $g = a_1+a_2+ \cdots +a_n $.  
\end{corollary}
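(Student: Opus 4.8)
The plan is to observe that this corollary is nothing more than a specialization of Lemma~\ref{BP2011} to the finite case, so the proof reduces to checking that the hypotheses of that lemma are satisfied. Lemma~\ref{BP2011} already establishes, for \emph{every} Abelian group $A$ with at least three elements, that any prescribed target $a \in A$ can be written as a sum of $n \ge 2$ terms drawn from $A \setminus \{0\}$. Since a finite Abelian group $A$ with $|A| \ge 3$ in particular has at least three elements, I would simply invoke Lemma~\ref{BP2011} with the given element $g$ playing the role of $a$; the conclusion sought is then verbatim the conclusion of the lemma.

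First I would record that the condition ``$|A| \ge 3$'' in the corollary and the condition ``at least three elements'' in Lemma~\ref{BP2011} single out the same class of groups once one restricts attention to finite $A$. Second, I would emphasize that Lemma~\ref{BP2011} carries \emph{no} finiteness assumption, so the finite case is strictly weaker than what has already been proved; hence no additional argument is required beyond citing the lemma. This makes the result an immediate consequence, exactly as the surrounding text asserts.

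There is essentially no genuine obstacle here, and the only thing requiring a moment's attention is the purely cosmetic matching of the two hypothesis phrasings. If one nevertheless wanted a self-contained argument, one would transcribe the explicit construction from the proof of Lemma~\ref{BP2011}: split into the cases $g = 0$ and $g \ne 0$, and within each case into the subcases $n$ odd and $n$ even, using two fixed nonzero elements (with the restrictions $b \ne -c$ in the first case and $a \ne -b$ in the second) to form alternating sums whose totals collapse to $g$. The sole point needing verification would be that every summand lies in $A \setminus \{0\}$, which is precisely what those restrictions guarantee; but as this verification is already carried out in Lemma~\ref{BP2011}, the corollary follows at once.
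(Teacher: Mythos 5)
Your proposal is correct and matches the paper exactly: the paper offers no separate proof, stating only that this corollary is an immediate consequence of Lemma~\ref{BP2011}, which is precisely your argument (specializing the lemma, proved without any finiteness assumption, to finite $A$ with $g$ in the role of $a$). Your additional remarks about matching hypotheses and the optional self-contained transcription of the lemma's case analysis are fine but not needed.
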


\begin{theorem} \label{BP2009}
A graph $G$ is $A$-vertex magic, where $|A|>2$ if and only if it's reduced graph $H$ of $G$ is $A'$ magic.
\end{theorem}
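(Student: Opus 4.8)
The plan is to prove the biconditional in Theorem~\ref{BP2009} by passing between a magic labeling of $G$ and an $A'$ magic labeling of its reduced graph $H$, using the fact that every vertex $[u]$ of $H$ corresponds to an independent set $[u_i]=\{u_1^i,\ldots,u_{m_i}^i\}$ in $G$ with a common neighborhood. The central observation is that if $v\in[u_i]$, then $w(v)$ computed in $G$ depends only on the \emph{sum} of the labels over each neighboring class, since $N_G(v)$ is the union of all classes $[u_j]$ adjacent to $[u_i]$ together with the \emph{other} vertices in $[u_i]$ only if $\langle[u_i]\rangle$ has edges; but because each $[u_i]$ is an independent set, $N_G(v)=\bigcup_{[u_j]\in N_H([u_i])}[u_j]$. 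Hence for each class $[u_j]$ I set $L([u_j]):=\sum_{t=1}^{m_j} l'(u_t^j)$, the total label mass of the class, and then $w(v)=\sum_{[u_j]\in N_H([u_i])}L([u_j])$ for every $v\in[u_i]$.

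For the forward direction, I would assume $G$ is $A$-vertex magic via $l':V(G)\to A\setminus\{0\}$ with constant $\mu$, and define $l([u_j]):=L([u_j])$ on $V(H)$. Then the displayed sum condition $\sum_{[u]\in N_H([v])}l([u])=\mu$ holds immediately from the identity above. The only subtlety is that $l$ may assign $0$ to a class: this happens precisely when the labels of a class cancel, which forces $|[u_j]|=m_j\geq 2$ (a singleton class carries a single nonzero label and cannot sum to $0$). This is exactly what the definition of $A'$ magic permits, so $l$ is a legitimate $A'$ magic labeling of $H$.

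For the converse, I would start from an $A'$ magic labeling $l:V(H)\to A$ with constant $\mu$ and construct $l':V(G)\to A\setminus\{0\}$ class by class so that each class sum $L([u_j])$ equals the prescribed value $l([u_j])$. If $l([u_j])\neq 0$ I need $m_j$ nonzero elements summing to $l([u_j])$; if $l([u_j])=0$ (which is only allowed when $m_j\geq 2$) I need $m_j\geq 2$ nonzero elements summing to $0$. Both requirements are supplied verbatim by Lemma~\ref{BP2011}, which produces, for any target $a\in A$ and any $n\geq 2$, elements $a_1,\ldots,a_n\in A\setminus\{0\}$ with $\sum a_i=a$. For singleton classes ($m_j=1$) the target $l([u_j])$ is itself nonzero, so I simply label $u_1^j$ with $l([u_j])$. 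With these labels, $w(v)=\sum_{[u_j]\in N_H([u_i])}L([u_j])=\sum_{[u_j]\in N_H([u_i])}l([u_j])=\mu$ for every $v$, so $G$ is $A$-vertex magic.

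The main obstacle, and the step I would handle most carefully, is the bookkeeping around $0$: I must confirm that a class forced to sum to $0$ always has size at least $2$ (so Lemma~\ref{BP2011} applies) and, conversely in the forward direction, that a singleton class can never have label sum $0$, so the use of $0$ on $H$ is confined exactly to the classes of size $\geq 2$ as the definition of $A'$ magic demands. The hypothesis $|A|>2$ enters precisely through Lemma~\ref{BP2011}, which fails for $|A|=2$; everything else is the routine verification that $L(\cdot)$ transports the magic condition faithfully across the $H$-join structure.
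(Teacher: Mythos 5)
Your proposal is correct and follows essentially the same route as the paper: the forward direction defines the label of each class $[u]$ in $H$ as the sum $\sum_{u'\sim_G u} l(u')$ of the labels in that class, and the converse distributes each class label $f([u])$ over the class vertices via Lemma~\ref{BP2011}, with singleton classes labeled directly. If anything, you are more explicit than the paper about the key bookkeeping point --- that a singleton class can never carry sum $0$ in the forward direction, so the induced labeling respects the $A'$ magic definition --- which the paper's proof leaves implicit.
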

\begin{proof}
Assume that $G$ is $A$-vertex magic and $l$ is corresponding  $A$-vertex magic labeling. Define ${l}' : V(H) \to A$ by ${l}'([v])= \displaystyle \sum_{\stackrel{u \in V(G)}{u \sim_G v}} l(u)$. Then
\begin{align*}
w([v]) &= \displaystyle \sum_{[u]\in N([v])} l'([u])\\
 &= \displaystyle \sum \big( \sum_{\stackrel{u'\in V(G)}{u' \sim_G u}}l(u')\big)\\
 &= \sum_{u \in N(v)}l(u)\\
 &= w(v)
\end{align*}
Since $G$ is $A$-vertex magic, we have $H$ is $A'$ magic.\\
Conversely, assume that the reduced graph $H$ of $G$ is $A'$ magic and the corresponding labeling be $f$. Define $f' : V(G) \to A\setminus \{0\}$ by if $|[v]| =1$, then $f'(v)=f([v])$ and if $|[v]| \geq 2$, then we label the vertices of the class using Lemma $\ref{BP2011}$ in such a way that sum of the labels of all vertices in this equivalence class is equal to $f([v])$ in the reduced graph $H$ of G. 
\begin{align*}
  w(v) & = \sum_{u \in N(v)}f'(u)\\
 &= \displaystyle \sum_{[u]\in N([v])} f([u])\\
 &=  w([v]).
\end{align*}
\end{proof}

The following corollaries are immediate consequence of Theorem $\ref{BP2009}$. 

\begin{corollary}\label{BP2014}
Let $A$ be an Abelian group with at least three elements. If $G$ is a graph in which every non-pendant vertex is a strong support vertex, then $G$ is $A$-vertex magic.  
\end{corollary}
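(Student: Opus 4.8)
The plan is to invoke Theorem \ref{BP2009}: since $|A|>2$, it suffices to exhibit an $A'$-magic labeling of the reduced graph $H$ of $G$. So the first step is to pin down the shape of $H$ under the hypothesis that every non-pendant vertex of $G$ is a strong support vertex. I would first argue that each non-pendant vertex $w$ forms its own (singleton) class under $\sim_G$: being a strong support vertex, $w$ has at least two pendant neighbours, and a pendant neighbour of $w$ belongs to $N_G(w')$ only when $w'=w$, so no other vertex can share the open neighbourhood of $w$. Next, for a fixed strong support vertex $w$, all the pendant vertices attached to $w$ have the common neighbourhood $\{w\}$, hence form a single class $[p_w]$ with $|[p_w]|\ge 2$; in $H$ this class is adjacent only to $\{w\}$, i.e. it is a leaf. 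Thus $V(H)$ splits into the singleton classes $\{w\}$ (one per strong support vertex) together with the leaf-classes $[p_w]$ of size $\ge 2$, where $\{w\}$ is adjacent in $H$ to $[p_w]$ and to each $\{w'\}$ with $ww'\in E(G)$ and $w'$ non-pendant.

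With this structure in hand, I would define $l:V(H)\to A$ by fixing any $\mu\in A\setminus\{0\}$, setting $l(\{w\})=\mu$ on every support-class, and then using the freedom granted to classes of size $\ge 2$ to set $l([p_w])=(1-d_w)\mu$, where $d_w$ denotes the number of strong-support neighbours of $w$ in $G$ (equivalently $\deg_H(\{w\})-1$). The weight of a leaf-class is then $w([p_w])=l(\{w\})=\mu$, while, writing the sum over the $d_w$ strong-support neighbours $w'$ of $w$,
\[
w(\{w\}) = l([p_w]) + \sum_{w'} l(\{w'\}) = (1-d_w)\mu + d_w\mu = \mu ,
\]
so $H$ is $A'$-magic with constant $\mu$. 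Note that the singleton classes correctly receive the nonzero label $\mu$, whereas the possibly-zero values $(1-d_w)\mu$ only ever land on the classes $[p_w]$ of size $\ge 2$, where $0$ is permitted. Theorem \ref{BP2009} then yields that $G$ is $A$-vertex magic, its proof pulling each label $l([p_w])$ back to a sum of $|[p_w]|\ge 2$ nonzero elements via Lemma \ref{BP2011} (which is exactly where $|A|\ge 3$ is used).

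The only real work is the structural step, namely verifying that support vertices give singleton classes and that pendant bundles give leaves of size $\ge 2$; once $H$ is understood, the labeling is essentially forced, since the leaf conditions already pin down $l(\{w\})=\mu$, and the magic verification is immediate. The one point to keep honest is the constraint that singleton classes must carry nonzero labels, which is precisely why $\mu$ is chosen nonzero and why the flexibility of Lemma \ref{BP2011} is reserved for the size-$\ge 2$ classes.
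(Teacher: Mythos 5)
Your proposal is correct and follows essentially the same route as the paper: pass to the reduced graph $H$, label every (singleton) support class with a fixed $\mu\neq 0$ and every pendant bundle with a suitable multiple of $\mu$, and invoke Theorem \ref{BP2009}, with Lemma \ref{BP2011} absorbing the possibly-zero labels on classes of size at least two. In fact your coefficient is the right one: with $d_w=\deg_H(\{w\})-1$ non-pendant neighbours, the pendant class must receive $(1-d_w)\mu=(2-\deg_H(\{w\}))\mu$, exactly as you write, so that the weight of $\{w\}$ is $(1-d_w)\mu+d_w\mu=\mu$, matching the weight $\mu$ of each pendant class. The paper's printed formula $-(d_H([v_i])-1)a$ is off by one: it forces every support class to have weight $0$ while every pendant class has weight $a\neq 0$, so your version silently corrects a slip in the published proof. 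Your structural analysis (support vertices form singleton classes because their pendant neighbours have no other neighbour, and pendant bundles form leaf classes of size at least two, which is precisely where the $A'$-magic definition permits zero labels) is exactly the justification the paper leaves implicit.
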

\begin{proof}
Let $v_1,v_2,\ldots,v_k$ be non-pendant vertices of $G$ and $v_{i_j}$ be the pendant vertices of $G$, which are adjacent to $v_i$, $j = 1,2,\ldots,m_i$, where $m_i \geq 2$ for all $i=1,2,\ldots,k$. Let $H$ be the reduced graph of $G$. Define $l : V(H) \to A\setminus \{0\}$ by \[l([v_m])= \begin{cases} \text{$a$} &\quad\text{if $d_G(v_m) > 2$}\\ \text{$-(d_H([v_i])-1)a$} &\quad\text{if $v_m = v_{i_j}$ for some $i,j$} \end{cases}\]where $a \in A \setminus \{0\}$. Thus $w([v])=a$, for all $[v]\in V(H)$. By Theorem $\ref{BP2009}$, we get the required result.
\end{proof}

The following result proved in \cite{Sabeel} is an immediate consequence of corollary \ref{BP2014}.

\begin{corollary}(Theorem $2.2$ in \cite{Sabeel}) Let $A$ be a finite Abelian group with $|A| \geq 3$. If $G$ is a graph in which every non-pendant vertex is a strong support vertex, then $G$ is $A$-vertex magic.
\end{corollary}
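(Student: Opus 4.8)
The plan is to recognize that this statement is nothing more than the finite-group specialization of Corollary \ref{BP2014}, which has already been established for an arbitrary Abelian group with at least three elements. So the entire strategy reduces to checking that the hypotheses of the more general result are met and then quoting it.

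Concretely, I would first observe that the structural hypothesis on $G$ is identical in both statements: every non-pendant vertex of $G$ is a strong support vertex. The only difference lies in the group hypothesis. Corollary \ref{BP2014} requires $A$ to be an Abelian group with at least three elements, whereas here $A$ is assumed to be a \emph{finite} Abelian group with $|A|\geq 3$. Since any finite Abelian group with $|A|\geq 3$ is in particular an Abelian group with at least three elements, the group hypothesis of Corollary \ref{BP2014} is automatically satisfied. Hence Corollary \ref{BP2014} applies verbatim and yields that $G$ is $A$-vertex magic, completing the argument in a single line.

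If one preferred a self-contained derivation rather than a citation to Corollary \ref{BP2014}, the route would be to reconstruct the same labeling explicitly: pass to the reduced graph $H$ of $G$, note that each strong support vertex carries an equivalence class of pendant vertices of size $m_i \geq 2$, and assign $l([v_m]) = a$ on non-pendant classes and $l([v_m]) = -(d_H([v_i])-1)a$ on the pendant classes attached to $v_i$, so that every weight equals $a$. Because the pendant classes have size at least two, Lemma \ref{BP2011} guarantees that the required sum can be realised by nonzero labels within each class, and Theorem \ref{BP2009} then converts this $A'$-magic labeling of $H$ into an $A$-vertex magic labeling of $G$.

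I do not expect any genuine obstacle here: all the substantive work — the construction of the labeling on the reduced graph and the translation through $A'$-magicness — was already carried out in the proof of Corollary \ref{BP2014}. The only point requiring a moment's care is the logical direction of the group hypotheses, namely confirming that ``finite with $|A|\geq 3$'' is a strictly stronger condition than ``Abelian with at least three elements,'' so that no finiteness assumption is actually needed and the present corollary is indeed subsumed by the earlier one.
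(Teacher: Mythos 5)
Your proposal is correct and takes exactly the paper's route: the paper states this result immediately after Corollary \ref{BP2014} and justifies it solely as a specialization of that corollary, since a finite Abelian group with $|A|\geq 3$ is in particular an Abelian group with at least three elements. Your optional self-contained sketch (reduced graph, the labeling $a$ and $-(d_H([v_i])-1)a$, Lemma \ref{BP2011}, Theorem \ref{BP2009}) is precisely the paper's proof of Corollary \ref{BP2014} itself, so nothing is missing.
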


\begin{corollary}
A caterpillar $T$ is $A$-vertex magic, where $|A|>2$ if and only if $T$ has no vertex of  degree two.
\end{corollary}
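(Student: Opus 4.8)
The plan is to route everything through Theorem \ref{BP2009} and work entirely on the reduced graph. First I would pin down the shape of $H$. Write the spine (the path left after deleting all leaves) as $p_1p_2\cdots p_s$; these are exactly the non-pendant vertices of $T$. Two distinct spine vertices cannot have equal open neighbourhoods (two common neighbours would give a $4$-cycle in a tree, a single common neighbour would make both pendant), so each $[p_i]$ is a singleton, while all leaves sharing a support $p_i$ collapse into one class $[\ell_i]$ of size equal to the number of leaves at $p_i$. Hence $H$ is again a caterpillar: a path $[p_1]\cdots[p_s]$ carrying, at each $p_i$ that owns a leaf, one pendant class $[\ell_i]$. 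By Theorem \ref{BP2009} it then suffices to decide when $H$ is $A'$ magic, i.e.\ when the open-neighbourhood sums can all be made equal with every singleton class labelled nonzero and a class $[\ell_i]$ allowed the value $0$ only when its size is $\ge 2$.

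For the forward implication I would exhibit a labelling directly. Assume $T$ has no vertex of degree two and $s\ge 2$. Then each endpoint $p_1,p_s$ owns at least two leaves (degree $\ne 2$ forces $\ge 2$) and each internal $p_i$ owns at least one leaf. Fix $a\in A\setminus\{0\}$ and set $l([p_i])=a$ for all $i$, $l([\ell_1])=l([\ell_s])=0$ (legal, since these classes have size $\ge 2$), and $l([\ell_i])=-a$ for internal $i$. A one-line check gives weight $a$ at every leaf class and at every spine vertex, so $H$ is $A'$ magic with $\mu=a$ and $T$ is $A$-vertex magic. The leftover case $s=1$ is the star $K_{1,m}$ with $m\ne 2$, whose reduced graph is $K_2$; since $K_2$ is $A$-vertex magic, Theorem \ref{BP2003} finishes it.

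For the converse I would show that an $A'$ magic labelling of $H$ with constant $\mu$ is incompatible with a degree-two vertex once $|V(T)|\ge 4$ (equivalently $s\ge 2$). Each leaf class $[\ell_i]$ yields $l([p_i])=\mu$, and since both endpoints necessarily own a leaf we get $\mu=l([p_1])\ne 0$. The key step is that every spine vertex must in fact own a leaf: if some internal $p_i$ owned none, take the maximal block $[a,b]$ of consecutive leaf-free spine vertices containing it; its left boundary $p_{a-1}$ owns a leaf, so $l([p_{a-1}])=\mu$, and the weight equation at the leaf-free vertex $p_a$ reads $l([p_{a-1}])+l([p_{a+1}])=\mu$, forcing $l([p_{a+1}])=0$ on a singleton class, a contradiction. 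Hence $l([p_i])=\mu$ for all $i$. Feeding this back, the endpoint equation $l([p_2])+l([\ell_1])=\mu$ gives $l([\ell_1])=0$, which is legal only when $[\ell_1]$ has size $\ge 2$, i.e.\ $p_1$ carries at least two leaves; symmetrically for $p_s$. Thus every endpoint has degree $\ge 3$, every internal vertex owns a leaf and so has degree $\ge 3$, and no vertex of degree two can occur.

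The main obstacle, and the reason Corollary \ref{BP2014} cannot simply be quoted, is that ``no vertex of degree two'' does \emph{not} make every non-pendant vertex a strong support vertex: an internal spine vertex may own exactly one leaf. What rescues such configurations is precisely the permission, inside the $A'$ magic framework, to place $0$ on a pendant class of size $\ge 2$; this is what keeps the endpoint equations solvable even though $H$ itself (for instance the $P_4$ arising from a double star) need not be $A$-vertex magic. I would also flag the genuine small exception $K_{1,2}=P_3$, which has a degree-two vertex yet is $A$-vertex magic: the statement is to be read for caterpillars with $|V(T)|\ge 4$, the smaller cases being already settled by Observation \ref{BP2007}.
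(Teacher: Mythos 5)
Your proof is correct and follows essentially the same route as the paper's: reduce via Theorem \ref{BP2009} to $A'$ magicness of the reduced caterpillar, use the identical labelling for sufficiency ($a$ on every spine class, $0$ on the two end leaf classes, $-a$ on internal leaf classes, exactly the paper's Figure \ref{Fig2}), and for necessity force a zero label onto a singleton spine class. Your write-up is in fact tighter than the paper's: the maximal leaf-free block argument covers degree-two vertices whose spine neighbours also own no leaves (a case the paper's Figure \ref{Fig1} argument implicitly assumes away by placing a leaf class on $v_1$), and the exception you flag, $P_3=K_{1,2}$, is a genuine counterexample to the statement as literally written, since it has a degree-two vertex yet is $A$-vertex magic by Observation \ref{BP2007}.
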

\begin{proof}
Let $T$ be a caterpillar. Assume that $T$ has a  non-corner vertex with degree $2$,  let $v_2$ be such a vertex. Also, in the reduced graph $H$, $deg([v_2])=2$(see Figure \ref{Fig1}).

\begin{figure}[htbp]
\begin{center}
\unitlength 1mm 
\linethickness{0.4pt}
\ifx\plotpoint\undefined\newsavebox{\plotpoint}\fi 
\begin{picture}(130.823,35.525)(0,0)
\put(90.964,32.112){\circle*{2.623}}
\multiput(4.601,32.148)(.0322143,.0481429){14}{\line(0,1){.0481429}}
\put(5.052,32.822){\line(-1,0){.225}}
\multiput(5.052,32.148)(-.0338,.0337){20}{\line(-1,0){.0338}}
\multiput(4.376,32.822)(.0965,-.0320714){14}{\line(1,0){.0965}}
\put(4.562,32.108){\circle*{2.623}}
\multiput(90.775,32.148)(.0322857,.0481429){14}{\line(0,1){.0481429}}
\put(91.227,32.822){\line(-1,0){.225}}
\multiput(91.227,32.148)(-.03375,.0337){20}{\line(-1,0){.03375}}
\multiput(90.552,32.822)(.0962857,-.0320714){14}{\line(1,0){.0962857}}
\put(90.739,32.112){\circle*{2.623}}
\multiput(129.028,31.923)(.0321429,.0482143){14}{\line(0,1){.0482143}}
\put(129.478,32.598){\line(-1,0){.225}}
\multiput(129.478,31.923)(-.0321905,.0321429){21}{\line(-1,0){.0321905}}
\multiput(128.802,32.598)(.0964286,-.0321429){14}{\line(1,0){.0964286}}
\put(128.988,31.886){\circle*{2.623}}
\thicklines
\put(91.605,32.078){\line(1,0){.9787}}
\put(93.562,32.078){\line(1,0){.9787}}
\put(95.52,32.078){\line(1,0){.9787}}
\put(97.477,32.078){\line(1,0){.9787}}
\put(99.434,32.078){\line(1,0){.9787}}
\put(101.392,32.078){\line(1,0){.9787}}
\put(103.349,32.078){\line(1,0){.9787}}
\put(105.307,32.078){\line(1,0){.9787}}
\put(107.264,32.078){\line(1,0){.9787}}
\put(109.221,32.078){\line(1,0){.9787}}
\put(111.179,32.078){\line(1,0){.9787}}
\put(113.136,32.078){\line(1,0){.9787}}
\put(115.094,32.078){\line(1,0){.9787}}
\put(117.051,32.078){\line(1,0){.9787}}
\put(119.008,32.078){\line(1,0){.9787}}
\put(120.966,32.078){\line(1,0){.9787}}
\put(122.923,32.078){\line(1,0){.9787}}
\put(124.881,32.078){\line(1,0){.9787}}
\put(126.838,32.078){\line(1,0){.9787}}
\put(128.795,32.078){\line(1,0){.9787}}
\thinlines
\multiput(128.575,32.598)(.032143,-.032143){7}{\line(1,0){.032143}}
\put(90.325,31.925){\line(-1,0){85.95}}
\put(4.827,32.15){\line(0,-1){24.75}}
\put(5.012,7.812){\circle*{2.623}}
\multiput(4.827,8.076)(.032143,-.032286){7}{\line(0,-1){.032286}}
\put(44.389,31.886){\circle*{2.623}}
\multiput(42.627,11.451)(.032286,-.032429){7}{\line(0,-1){.032429}}
\multiput(44.653,8.076)(.032143,-.032286){7}{\line(0,-1){.032286}}
\multiput(91.902,8.524)(.032286,-.031857){7}{\line(1,0){.032286}}
\put(90.552,32.15){\line(0,-1){24.75}}
\put(90.74,7.812){\circle*{2.623}}
\multiput(90.552,8.076)(.032286,-.032286){7}{\line(0,-1){.032286}}
\put(129.252,32.15){\line(0,-1){24.75}}
\put(129.435,7.812){\circle*{2.623}}
\multiput(129.252,8.076)(.032286,-.032286){7}{\line(0,-1){.032286}}
\put(4.375,35.525){\makebox(0,0)[cc]{$[v_1]$}}
\put(44.2,35.525){\makebox(0,0)[cc]{$[v_2]$}}
\put(90.55,35.075){\makebox(0,0)[cc]{$[v_3]$}}
\put(128.8,35.3){\makebox(0,0)[cc]{$[v_k]$}}
\put(4.5,2.5){\makebox(0,0)[cc]{$[v_1']$}}
\put(90.25,2.75){\makebox(0,0)[cc]{$[v_3']$}}
\put(129.999,3.25){\makebox(0,0)[cc]{$[v_k']$}}
\end{picture}
\end{center}
\caption{}\label{Fig1}
\end{figure}

Suppose $H$ is $A'$-vertex magic, then $w([v_2])= l([v_1])+l([v_3]).$ Let $[v_1']$ be a equivalence class which contained all vertices only adjacent to $v_1$ in $T$. Since $w([v_1'])=w([v_2])$ which implies $l([v_3]) = 0$. But we know $|[v_3]|=1$, by Theorem \ref{BP2009} which is a contradiction to $l([v_3]) = 0$ in $H$. The same argument works for corner vertex also.

Conversely,
let $T$ be a caterpillar such that it has no vertex of degree $2$. The reduced graph $H$ of $T$ is $A'$ magic, for all Abelian groups $A$ with $|A| > 2$ (see Figure \ref{Fig2}).  By Theorem \ref{BP2009}, $T$ is $A$-vertex magic.
\end{proof}

\begin{figure}[htbp]
\begin{center}
\unitlength 1mm 
\linethickness{0.4pt}
\ifx\plotpoint\undefined\newsavebox{\plotpoint}\fi 
\begin{picture}(141.75,36.5)(0,0)
\put(97.459,32.706){\circle*{2.915}}
\multiput(1.502,32.747)(.0334,.05){15}{\line(0,1){.05}}
\put(2.003,33.497){\line(-1,0){.25}}
\multiput(2.003,32.747)(-.0326522,.0326087){23}{\line(-1,0){.0326522}}
\multiput(1.252,33.497)(.1000667,-.0333333){15}{\line(1,0){.1000667}}
\put(1.458,32.704){\circle*{2.915}}
\multiput(97.253,32.747)(.0333333,.05){15}{\line(0,1){.05}}
\put(97.753,33.497){\line(-1,0){.25}}
\multiput(97.753,32.747)(-.0326087,.0326087){23}{\line(0,1){.0326087}}
\multiput(97.003,33.497)(.1,-.0333333){15}{\line(1,0){.1}}
\put(97.209,32.706){\circle*{2.915}}
\multiput(139.752,32.497)(.0333333,.05){15}{\line(0,1){.05}}
\put(140.252,33.247){\line(-1,0){.25}}
\multiput(140.252,32.497)(-.0326087,.0326087){23}{\line(0,1){.0326087}}
\multiput(139.502,33.247)(.1,-.0333333){15}{\line(1,0){.1}}
\put(139.709,32.456){\circle*{2.915}}
\thicklines
\put(98.181,32.677){\line(1,0){.9886}}
\put(100.158,32.677){\line(1,0){.9886}}
\put(102.136,32.677){\line(1,0){.9886}}
\put(104.113,32.677){\line(1,0){.9886}}
\put(106.09,32.677){\line(1,0){.9886}}
\put(108.067,32.677){\line(1,0){.9886}}
\put(110.045,32.677){\line(1,0){.9886}}
\put(112.022,32.677){\line(1,0){.9886}}
\put(113.999,32.677){\line(1,0){.9886}}
\put(115.976,32.677){\line(1,0){.9886}}
\put(117.953,32.677){\line(1,0){.9886}}
\put(119.931,32.677){\line(1,0){.9886}}
\put(121.908,32.677){\line(1,0){.9886}}
\put(123.885,32.677){\line(1,0){.9886}}
\put(125.862,32.677){\line(1,0){.9886}}
\put(127.84,32.677){\line(1,0){.9886}}
\put(129.817,32.677){\line(1,0){.9886}}
\put(131.794,32.677){\line(1,0){.9886}}
\put(133.771,32.677){\line(1,0){.9886}}
\put(135.749,32.677){\line(1,0){.9886}}
\put(137.726,32.677){\line(1,0){.9886}}
\put(139.703,32.677){\line(1,0){.9886}}
\thinlines
\multiput(139.25,33.247)(.03125,-.03125){8}{\line(0,-1){.03125}}
\put(96.753,32.5){\line(-1,0){95.5}}
\put(1.753,32.75){\line(0,-1){27.5}}
\put(1.958,5.707){\circle*{2.915}}
\multiput(1.753,5.999)(.03125,-.03125){8}{\line(0,-1){.03125}}
\put(45.711,32.456){\circle*{2.915}}
\multiput(43.754,9.749)(.03125,-.03125){8}{\line(0,-1){.03125}}
\put(46.004,32.75){\line(0,-1){27.5}}
\put(46.209,5.707){\circle*{2.915}}
\multiput(46.004,5.999)(.03125,-.03125){8}{\line(0,-1){.03125}}
\multiput(98.505,6.499)(.03125,-.03125){8}{\line(0,-1){.03125}}
\put(97.005,32.75){\line(0,-1){27.5}}
\put(97.211,5.707){\circle*{2.915}}
\multiput(97.005,5.999)(.03125,-.03125){8}{\line(0,-1){.03125}}
\put(140.002,32.75){\line(0,-1){27.5}}
\put(140.209,5.707){\circle*{2.915}}
\multiput(140.002,5.999)(.03125,-.03125){8}{\line(0,-1){.03125}}
\put(1.251,36.5){\makebox(0,0)[cc]{$a$}}
\put(45.501,36.5){\makebox(0,0)[cc]{$a$}}
\put(97.001,36){\makebox(0,0)[cc]{$a$}}
\put(139.5,36.25){\makebox(0,0)[cc]{$a$}}
\put(1.501,1){\makebox(0,0)[cc]{$0$}}
\put(46.501,.75){\makebox(0,0)[cc]{$-a$}}
\put(97.251,1){\makebox(0,0)[cc]{$-a$}}
\put(140.25,0){\makebox(0,0)[cc]{$0$}}
\end{picture}
\end{center}
\caption{$A'$-vertex magic labeling of reduced graph $H$ of $T$}\label{Fig2}
\end{figure}
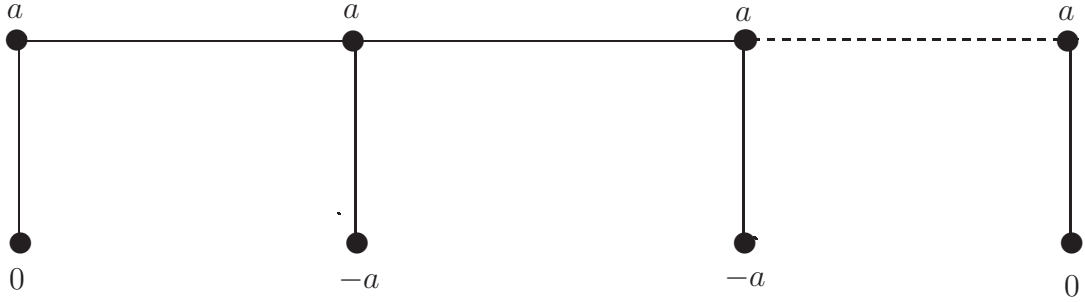

\begin{remark}
Given a graph $G$, under the relation $\sim_G$ the vertex set $V(G)$ is partitioned into equivalence classes. For every equivalence class $[v_i]$, with $|[v_i]| = 1$, introduce a new vertex $x_i$ in  $[v_i]$. Now, join the new vertex $x_i$ to all vertices in equivalence classes belongs to $N([v_i])$. Thus each equivalence class will have at least two vertices. Hence by Theorem $\ref{BP2008}$ the resultant graph is $A$-vertex magic, for all $|A|>2$.
\end{remark}

In the above construction we newly add at most $|V(G)|$ vertices, the graph $G$ is convert into $A$-vertex magic, for all $|A|>2$. Therefore, we get the following corollary as an  immediate consequence of the above construction.
\begin{corollary}\label{BP2015}
Any graph $H'$ is an induced subgraph of $A$-vertex magic graph $G$, where $|A| > 2$.
\end{corollary}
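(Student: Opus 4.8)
The plan is to turn the informal construction in the Remark immediately preceding the statement into a precise proof and then invoke Theorem \ref{BP2008}. Given an arbitrary graph $H'$, I would first partition $V(H')$ into equivalence classes under the relation $\sim_{H'}$. Recall that every such class is an independent set: if $u \sim_{H'} v$ then $N_{H'}(u) = N_{H'}(v)$, and were $u$ and $v$ adjacent this would force $v \in N_{H'}(v)$, which is impossible in a simple graph. For each class $[v_i]$ with $|[v_i]| = 1$ I would introduce a new vertex $x_i$ and make it adjacent to exactly the vertices lying in the classes of $N([v_i])$ (and to $x_j$ whenever $v_i$ and $v_j$ were adjacent in $H'$); call the resulting graph $G$. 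Since at most one vertex is added per singleton class, $|V(G)| \le 2\,|V(H')|$, which is the promised economy in the number of vertices.

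Next I would check that $H'$ is an induced subgraph of $G$. This is immediate from the construction: no vertex of $H'$ is deleted, and no edge is added or removed between two vertices of $H'$, since every newly inserted edge is incident with some new vertex $x_i$. Hence the subgraph of $G$ induced on $V(H')$ is exactly $H'$.

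Finally I would verify that $G$ meets the hypotheses of Theorem \ref{BP2008}. The decisive point, and the only place requiring care, is to confirm that each new $x_i$ is a genuine twin of its original vertex, i.e.\ $N_G(x_i) = N_G(v_i)$; this is precisely why $x_i$ is joined to every vertex of each class in $N([v_i])$ and why the edges $x_i x_j$ between twins of adjacent originals must be inserted. Granting this, $x_i \sim_G v_i$, so every equivalence class of $G$ contains at least two vertices, while independence of the classes holds automatically as above. Writing $H$ for the reduced graph of $G$, we then have $G \cong H[K_{n_1}^c, K_{n_2}^c, \ldots, K_{n_k}^c]$ with every $n_j \ge 2$, and Theorem \ref{BP2008} yields that $G$ is $A$-vertex magic for every Abelian group $A$ with $|A| > 2$, completing the proof. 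The main obstacle is therefore not any deep computation but the bookkeeping needed to guarantee that adding the twin vertices enlarges every singleton class to size at least two, without splitting or otherwise disturbing the class structure that makes Theorem \ref{BP2008} applicable.
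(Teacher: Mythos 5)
Your proposal is correct and takes essentially the same route as the paper: it formalizes the construction in the Remark preceding the corollary (enlarge every singleton equivalence class of $\sim_{H'}$ by a twin vertex) and then applies Theorem \ref{BP2008} to the resulting graph, whose classes all have size at least two. The one point you handle more carefully than the paper is the insertion of the edges $x_i x_j$ between new vertices attached to adjacent singleton classes --- the paper's phrase ``introduce a new vertex $x_i$ in $[v_i]$'' leaves this implicit, and it is genuinely required for $N_G(x_i)=N_G(v_i)$, so your bookkeeping is a welcome sharpening rather than a deviation.
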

The above corollary \ref{BP2015} is a generalisation of the following result proved in \cite{Sabeel} which has been proved only for finite Abelian groups and it uses more number of vertices. 

\begin{corollary}(Corollary 2.3, \cite{Sabeel})
Any graph $G$ is an induced subgraph of an $A$-vertex magic graph $H$.
\end{corollary}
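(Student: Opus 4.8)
The plan is to obtain this statement as a direct specialisation of Corollary \ref{BP2015}. That corollary already asserts that every graph is an induced subgraph of some $A$-vertex magic graph for \emph{every} non-trivial Abelian group $A$ with $|A| > 2$; since every finite Abelian group with $|A| \geq 3$ falls under this hypothesis, the desired conclusion follows at once. Thus the substance of the argument has already been carried out in the construction preceding Corollary \ref{BP2015}, and my proof would simply record that the weaker (finite-group) assertion is contained in the stronger one, while noting that the present construction adds at most $|V(G)|$ vertices, fewer than the embedding in \cite{Sabeel}.

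For completeness I would recall the underlying construction so that the embedding is explicit. Given $G$, I would form the equivalence classes of $\sim_G$ and, for each singleton class $[v_i]$, attach one new vertex $x_i$ adjacent precisely to those vertices lying in the classes of $N([v_i])$. Then $N(x_i) = N(v_i)$, so $x_i \sim_G v_i$ in the enlarged graph and the new class has size two, while all classes that were already of size at least two are unchanged. Crucially, no edge is added among the original vertices of $G$, so the subgraph of the enlarged graph induced on $V(G)$ is exactly $G$; this is the point guaranteeing that $G$ is an \emph{induced} subgraph. After this modification every equivalence class has size at least two and the reduced graph is still the original $H$, so the enlarged graph is the $H$-join $H[K_{n_1}^c,\ldots,K_{n_k}^c]$ with each $n_j > 1$, and Theorem \ref{BP2008} supplies an $A$-vertex magic labeling for all $|A| > 2$.

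The step carrying the real content is therefore Theorem \ref{BP2008}, whose labeling scheme forces the magic constant to be $0$ regardless of the structure of $H$; once that is available the embedding result is purely combinatorial, with the induced-subgraph claim reduced to the observation that new edges touch only the added vertices. The only point needing care is the boundary case $|A| = 2$: over $\mathbb{Z}_2$ every non-zero label equals the identity's complement, so the magic condition collapses to all vertices having equal degree parity, and the construction above need not satisfy it. Since the statement is understood for finite Abelian groups with $|A| \geq 3$, this case is excluded, and no further argument is required; I would simply remark that the bound $|A| > 2$ inherited from Corollary \ref{BP2015} is precisely what is needed.
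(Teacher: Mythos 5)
Your proposal is correct and follows exactly the paper's route: the paper also obtains this statement as an immediate specialisation of Corollary \ref{BP2015}, which in turn rests on the twin-vertex construction of the preceding Remark together with Theorem \ref{BP2008}. Your recap of that construction (including the observation that no new edges join original vertices, so $G$ stays induced) matches the paper's argument, so nothing further is needed.
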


We now state a procedure to
construct infinite classes of $A$-vertex magic graphs, when $|A|>2$. Let $G$ be a $A$-vertex magic graph  with magic labeling $l$. As $\sim_G$ is an equivalence relation, join a new vertex $x$ in any one of the equivalence class $[v_i]$ of $G$ and join by an edge $x$ into each vertex in $N(v_i)$, the resulting graph is $G'$. Then by Lemma \ref{BP2011}, we can relabel the vertices in $[v_i]$ and $x$ whose label sum is equal to $\sum_{v \in [v_i]} l({v})$, for the other vertices in $G'$ we retain the labeling given in $G$ by $l$. 

The following result proved in \cite{Sabeel} 
is an immediate consequence of the above construction.

\begin{corollary}\label{BP2016}(Theorem 3.1, \cite{Sabeel})
Let $t \geq 2$. Suppose $G$ is an $A$-vertex magic graph of order $n$ with a labeling $l$ and magic constant $g$. If there exists an edge $uv$ in $G$ with $l(u)=l(v)=g$, then the graph $G'$, obtained from $G$ by subdividing the edge $uv$ and by attaching $t$ pendant vertices at the new vertex $x$, is an $A$-vertex magic graph of order $n+t+1$ with the same magic constant $g$.
\end{corollary}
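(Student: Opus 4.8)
The plan is to extend the given magic labeling $l$ of $G$ to a labeling $l'$ of $G'$ by leaving $l$ unchanged on the original vertices, assigning one forced value to the new subdivision vertex $x$, and then invoking Lemma \ref{BP2011} to label the $t$ new pendant vertices so that the weight condition holds at $x$. Throughout I work in the regime $|A| \geq 3$, consistent with the construction preceding the statement and with the hypotheses of Lemma \ref{BP2011}.

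First I would record how the open neighborhoods change. Subdividing $uv$ replaces the edge $uv$ by the path $u$–$x$–$v$, so $N_{G'}(u) = (N_G(u)\setminus\{v\})\cup\{x\}$, $N_{G'}(v) = (N_G(v)\setminus\{u\})\cup\{x\}$, while $N_{G'}(z) = N_G(z)$ for every other original vertex $z$. The subdivision vertex $x$ has neighborhood $\{u,v\}$ together with the $t$ attached pendant vertices $w_1,\ldots,w_t$, and each $w_i$ satisfies $N_{G'}(w_i) = \{x\}$. Setting $l' = l$ on $V(G)$ and computing the weight at $u$ gives $w_{G'}(u) = l'(x) + \big(\sum_{y\in N_G(u)} l(y) - l(v)\big) = l'(x) + (g - g) = l'(x)$, using $w_G(u) = g$ and $l(v) = g$; the identical computation at $v$ uses $l(u) = g$. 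Hence both weights equal $g$ exactly when $l'(x) = g$, and since $g = l(u)\in A\setminus\{0\}$ this is a legitimate label. With $l'(x) = g$ the weight at each pendant $w_i$ is automatically $l'(x) = g$, and every untouched vertex $z$ retains $w_{G'}(z) = w_G(z) = g$, because $l'$ agrees with $l$ on all of $N_{G'}(z) = N_G(z)$.

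The only remaining constraint is the weight at $x$, namely $w_{G'}(x) = l(u) + l(v) + \sum_{i=1}^t l'(w_i) = 2g + \sum_{i=1}^t l'(w_i)$, which must equal $g$; equivalently $\sum_{i=1}^t l'(w_i) = -g$. Since $t \geq 2$ and $|A| \geq 3$, Lemma \ref{BP2011} applied with $a = -g$ and $n = t$ yields labels $l'(w_1),\ldots,l'(w_t)\in A\setminus\{0\}$ with the required sum, completing the labeling; the order count $n + t + 1$ is immediate since we added $x$ and the $t$ pendants. I expect this last step — meeting the magic constant $g$ simultaneously at $u$, $v$, and $x$ while keeping every label nonzero — to be the only genuine point to verify, and it is precisely where the hypothesis $l(u) = l(v) = g$ (forcing $l'(x) = g$ and balancing the contributions of $u,v$ at $x$) and the availability of Lemma \ref{BP2011} for $t \geq 2$ are used.
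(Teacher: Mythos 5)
Your proof is correct, and it uses the same essential mechanism the paper has in mind --- but note that the paper never actually writes a proof of Corollary \ref{BP2016}: it only asserts it is ``an immediate consequence'' of the preceding construction (adding one new vertex into an equivalence class of an $A$-vertex magic graph and relabelling that class via Lemma \ref{BP2011}). Your direct verification is in fact more complete than that derivation, because the paper's construction never alters adjacencies among existing vertices and so cannot by itself produce the subdivision of $uv$; to recover $G'$ from it one would need a separate base case (a direct check that $G$ with $uv$ subdivided and one pendant at $x$ is magic, with forced labels $l'(x)=g$ and $l'(w_1)=-g$) followed by an induction growing the pendant class one vertex at a time. Your one-shot labelling --- keep $l$ on $V(G)$, observe that the weight equations at $u$ and $v$ force $l'(x)=g$ (precisely where the hypothesis $l(u)=l(v)=g$ enters, and legitimate since $g=l(u)\neq 0$), and then invoke Lemma \ref{BP2011} to give the $t\geq 2$ pendants nonzero labels summing to $-g$ --- collapses that induction and checks every class of vertices of $G'$ explicitly. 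You were also right to make the standing assumption $|A|\geq 3$ explicit: it is genuinely needed (for $A=\mathbb{Z}_2$ and $t$ even, the required pendant sum $-g$ cannot be realized by nonzero labels), and the paper inherits it silently from the hypothesis $|A|>2$ of its construction.
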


The main difference between our construction  and corollary \ref{BP2016} is, it deals with more number of vertices and it is proved under an additional assumption on the magic labeling. We just add only one new vertex and our construction is true for infinite Abelian groups also.

\section{Group Vertex Magic labeling of Generalised Friendship Graph} 

In \cite{Fernaua}, Henning Fernau et al. defined the Generalised Friendship Graph as follows
\begin{definition} 
The friendship graph $f_m$ is a collection of $m$ triangles with a common vertex. The generalised friendship graph $f_{n,m}$ is a collection of $m$ cycles (all of order $n$), meeting at a common vertex.
\end{definition}

\begin{proposition}\label{BP2012}
Let $m\in \mathbb{N}$. The group $A$ is isomorphic to the group $\prod \mathbb{Z}_{m_i}$. Now, $m_i \mid (2m-1),$ for all $i$ if and only if $o(a) \mid (2m-1)$, for all $a \in A$. 
\end{proposition}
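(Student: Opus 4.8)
The plan is to reduce both implications to the observation that divisibility by the fixed integer $N := 2m-1$ can be detected coordinatewise on the cyclic factors of $A$. Throughout I would work with the decomposition $A \cong \mathbb{Z}_{m_1} \times \cdots \times \mathbb{Z}_{m_k}$ furnished by the fundamental theorem of finite Abelian groups, and I would repeatedly use the standard fact that the order of an element $a = (a_1,\ldots,a_k)$ in such a product is $o(a) = \operatorname{lcm}(o(a_1),\ldots,o(a_k))$.

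For the forward direction, I would assume $m_i \mid N$ for every $i$ and take an arbitrary $a = (a_1,\ldots,a_k) \in A$. Since $o(a_i) \mid m_i \mid N$ in each coordinate, $N$ is a common multiple of $o(a_1),\ldots,o(a_k)$, and therefore $o(a) = \operatorname{lcm}(o(a_1),\ldots,o(a_k)) \mid N$. As $a$ was arbitrary, this yields $o(a) \mid (2m-1)$ for all $a \in A$.

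For the converse, I would assume $o(a) \mid N$ for all $a \in A$ and, for each fixed index $i$, exhibit a single element witnessing $m_i \mid N$. The natural choice is $e_i := (0,\ldots,0,1,0,\ldots,0)$ carrying the generator $1 \in \mathbb{Z}_{m_i}$ in the $i$-th slot and $0$ elsewhere; this element has order exactly $m_i$. Applying the hypothesis to $a = e_i$ gives $m_i = o(e_i) \mid N$, and since $i$ was arbitrary we conclude $m_i \mid (2m-1)$ for all $i$.

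I do not expect a genuine obstacle here: the whole statement is essentially the elementary fact that $\operatorname{lcm}(m_1,\ldots,m_k) \mid N$ if and only if $m_i \mid N$ for each $i$, transported through the identity $\exp(A) = \operatorname{lcm}(m_1,\ldots,m_k)$. If a more uniform phrasing is preferred, both directions can be packaged by noting that the condition ``$o(a)\mid N$ for all $a$'' is precisely $\exp(A) \mid N$, after which the result is immediate. The only points needing a little care are the correct use of the order formula in the direct product (the $\operatorname{lcm}$, not the product of the $m_i$) and the fact that each factor $\mathbb{Z}_{m_i}$ genuinely contains an element of order exactly $m_i$, which is what powers the converse.
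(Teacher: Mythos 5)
Your argument follows essentially the same coordinatewise route as the paper: the forward direction pushes the divisibility hypothesis through each factor, and the converse evaluates the hypothesis at an element whose $i$-th coordinate generates $\mathbb{Z}_{m_i}$; your explicit element $e_i$ is in fact a cleaner rendering of the paper's phrase ``since this is true for every $a_i \in \mathbb{Z}_{m_i}$, it also holds for a generator.'' The one point that needs repair is your appeal to the fundamental theorem of finite Abelian groups to produce a \emph{finite} decomposition $\mathbb{Z}_{m_1}\times\cdots\times\mathbb{Z}_{m_k}$. The decomposition $A \cong \prod \mathbb{Z}_{m_i}$ is a hypothesis of the proposition, not something to be derived, and nothing in the statement (or in the paper's standing convention that $A$ is not necessarily finite) forces the index set to be finite; the paper's own proof writes $a=(a_1,a_2,\ldots,a_i,\ldots)$, and the proposition is applied in Theorem~\ref{BP1118} to groups $\prod \mathbb{Z}_{m_i}$ that may have infinitely many factors. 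Over an infinite index set the identity $o(a)=\operatorname{lcm}\bigl(o(a_1),\ldots,o(a_k)\bigr)$ that powers your forward direction is not available as you stated it. The uniform fix is the paper's annihilation argument, which avoids the lcm entirely: if $m_i \mid (2m-1)$ for all $i$, then $(2m-1)a_i = 0$ in every coordinate, hence $(2m-1)a = 0$ and therefore $o(a) \mid (2m-1)$, with no finiteness assumption anywhere. Your converse via $e_i$ survives unchanged for arbitrary index sets, since the element with a generator in slot $i$ and zero elsewhere lies in the product and has order exactly $m_i$.
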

\begin{proof}
Let $A \cong \prod \mathbb{Z}_{m_i}$. Suppose that  $m_i \mid (2m-1)$, for all $i$. Let $a \in A$. Then $a=(a_1,a_2,\ldots,a_i,\ldots),$ where $a_i \in \mathbb{Z}_{m_i}$ for each $i$, which implies $o(a_i) \mid m_i$ for all $i$.
Then $o(a_i) \mid (2m-1).$ Now, $(2m-1)a_i = 0$ for all $i$. Therefore, $(2m-1)a=(0,0,\ldots,0,\ldots)$.
Hence $o(a) \mid (2m-1)$.\\ Conversely, assume that $o(a) \mid (2m-1),$ for all $a \in A$. Since $(2m-1)a=(0,0,\ldots,0,\ldots)$, which implies $(2m-1)a_i = 0$ for all $i$. Therefore $o(a_i) \mid (2m-1)$. Since this is true for every $a_i \in \mathbb{Z}_{m_i},$ it also hold for generator of $\mathbb{Z}_{m_i}$. Hence $m_i \mid (2m-1)$, for all $i$.

\end{proof}

\begin{theorem}\label{BP1118}
Let $n \equiv 1,3(\textrm{mod}\ 4).$ The generalised friendship graph $G=f_{n,m}$ is $A$-vertex magic if and only if $A \ncong \prod \mathbb{Z}_{m_i},$ where $m_i \mid (2m-1),$ for all $i$.
\end{theorem}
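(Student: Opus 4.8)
The plan is to reduce the magic condition on $f_{n,m}$ to a single equation in the abelian group $A$ and then invoke Proposition \ref{BP2012}. Let $c$ denote the common central vertex and, for $1\le i\le m$, write the $i$-th cycle as $c,u^i_1,u^i_2,\ldots,u^i_{n-1},c$. Suppose $l$ is an $A$-vertex magic labeling with constant $\mu$, and set $x=l(c)$ and $a^i_j=l(u^i_j)$, with the convention $a^i_0=a^i_n=x$. Reading off $w(u^i_j)=\mu$ for $1\le j\le n-1$ gives the linear recurrence $a^i_{j+1}=\mu-a^i_{j-1}$, so every label of the $i$-th cycle is determined by $x$ and $a^i_1$; explicitly the even-indexed terms alternate between $x$ and $\mu-x$, and the odd-indexed ones between $a^i_1$ and $\mu-a^i_1$. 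First I would use the closure relation $a^i_n=x$: since $n$ is odd, writing $n=2k+1$, this reads $a^i_1=x$ when $k$ is even (i.e.\ $n\equiv1\pmod4$) and $a^i_1=\mu-x$ when $k$ is odd (i.e.\ $n\equiv3\pmod4$). In particular every cycle is forced to carry the same labeling, and every vertex label lies in $\{x,\mu-x\}$.

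Next I would impose the condition at the hub, $w(c)=\sum_{i=1}^m\bigl(a^i_1+a^i_{n-1}\bigr)=\mu$, using $a^i_{n-1}=x$ (resp.\ $\mu-x$) in the two cases. For $n\equiv1\pmod4$ this collapses to $\mu=2mx$, whence the two labels are $x$ and $\mu-x=(2m-1)x$. For $n\equiv3\pmod4$ it collapses to $2m(\mu-x)=\mu$; setting $y=\mu-x$ this rearranges to $x=(2m-1)y$, so the two labels are $y$ and $(2m-1)y$. In both cases validity of the labeling is equivalent to both labels being nonzero, and the binding requirement is exactly that $(2m-1)x\ne0$ (resp.\ $(2m-1)y\ne0$), which forces the other label to be nonzero automatically. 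Hence, regardless of the residue class, $f_{n,m}$ is $A$-vertex magic if and only if there exists $a\in A$ with $(2m-1)a\ne0$, i.e.\ with $o(a)\nmid(2m-1)$.

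Finally I would translate this group-theoretic condition using Proposition \ref{BP2012}: the statement ``$o(a)\mid(2m-1)$ for all $a\in A$'' is equivalent to ``$A\cong\prod\mathbb{Z}_{m_i}$ with $m_i\mid(2m-1)$ for all $i$''. Taking contrapositives, ``there exists $a$ with $o(a)\nmid(2m-1)$'' is equivalent to ``$A\ncong\prod\mathbb{Z}_{m_i}$ with $m_i\mid(2m-1)$'', which is precisely the asserted criterion. For the sufficiency direction I would reverse the construction: given such an $a$, set $x=a$ (resp.\ $y=a$), define $\mu$ and all cycle labels by the formulas above, and verify directly that every weight equals $\mu$. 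The main obstacle I anticipate is the bookkeeping in the necessity direction---showing rigorously that the recurrence together with the odd-length closure forces the labeling into the two-value form, and checking that the nonzero-label constraint reduces cleanly to the single condition $o(a)\nmid(2m-1)$ in both residue classes; the hub equation itself and the appeal to Proposition \ref{BP2012} are then routine.
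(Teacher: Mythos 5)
Your proof is correct and reaches the theorem by the same overall strategy as the paper: force the labeling into a two-valued, $4$-periodic form, extract the condition $(2m-1)a \neq 0$ on some group element, and translate via Proposition \ref{BP2012}; your sufficiency labelings coincide with the paper's explicit ones. The genuine difference is in how the necessity direction is organized: the paper argues in three separate cases ($n=3$; $n \equiv 3 \pmod 4$ with $n>3$; $n \equiv 1 \pmod 4$), forcing labels by comparing weights of selected vertex pairs within a cycle and then across different cycles, whereas your single recurrence $a^i_{j+1}=\mu-a^i_{j-1}$ together with the closure condition $a^i_n=x$ treats all odd $n$ (including $n=3$) uniformly, and makes the equality of labelings on different cycles automatic, since each cycle's labels are determined by the global data $x$ and $\mu$. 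This also replaces the paper's unproved standing claim that $l(v_{i_k})=l(v_{i_{k+4}})$ by a consequence of the recurrence, and your observation that $(2m-1)x\neq 0$ already implies $x\neq 0$ cleanly isolates the single binding constraint. One caveat you share with the paper: the translation step uses Proposition \ref{BP2012} in the direction ``if $o(a)\mid(2m-1)$ for all $a\in A$ then $A\cong\prod\mathbb{Z}_{m_i}$ with $m_i\mid(2m-1)$,'' which for a general (possibly infinite) abelian group requires the fact that a group of bounded exponent is a direct sum of cyclic groups; this gap is inherited from the paper rather than introduced by your argument.
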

\begin{proof}
Let $V_1,V_2,\ldots,V_{m+1}$ be a partition of $V(G)$ with $|V_i|=n-1$, where $i=1,2,\ldots,m$. Let $V_i=\{v_{i_1},v_{i_2},\ldots,v_{i_{n-1}}\}$ and $V_{m+1}=\{v\}$. If $G$ is $A$-vertex magic, then $l(v_{i_k}) = l(v_{i_{k+4}})$. 
\item[\bf Case 1:]$n=3$.\\
Assume that $f_{3,m}$ is $A$-vertex magic with $l(v_{i_1})=a, l(v_{i_2})=b, l(v)=c$. Since $w(v_{i_1})=w(v_{i_2})$, which implies $a=b$. Let $j=\{1,2,\ldots,m\}$ and $i \neq j$. Assume that $l(v_{j_1})=a_1, l(v_{j_2})=b_1$.  Since $w(v_{i_1})=w(v_{j_1})$, which implies $b=b_1$. Since $w(v_{i_2})=w(v_{j_2})$, which implies $a=a_1$. Since $w(v_{i_1})=w(v_{j_2}),$ which implies $b=a_1$. Also, $w(v_{i_1})=w(v)$, which implies $c=(2m-1)a$. Hence $A \ncong \prod \mathbb{Z}_{m_i},$ where $m_i \mid (2m-1),$ for all $i$.\\
Conversely, assume that $A \ncong \prod \mathbb{Z}_{m_i},$ where $m_i \mid (2m-1),$ for all $i$. By Proposition \ref{BP2012}, $A$ has an element $a$ such that $o(a) \nmid (2m-1)$. Define $l: V(G) \to A \setminus\{0\}$ by $l(v_{i_k})=a$, for all $i,k$ and $l(v)=(2m-1)a$. Thus $w(v)= 2ma$, for all $v \in V(G)$.
\item[\bf Case 2:]$n \equiv 3(\textrm{mod}\ 4)$ and $n>3$.\\
Assume that $f_{n,m}$ is $A$-vertex magic with $l(v_{i_1})=a,l(v_{i_2})=b,l(v_{i_3})=c,l(v_{i_4})=d$ and $l(v)=e$. Since $w(v_{i_1})=w(v_{i_{n-1}})$, which implies $b=a$. Since $w(v_{i_1})=w(v_{i_3})$, which implies $e=d$. Also $w(v_{i_2})=w( v_{i_{n-1}})$, which implies $c=e$. Let $j \in \{1,2,\ldots,m\}$ and $i \neq j$. Assume that $l(v_{j_1})=a_1,l(v_{j_2})=b_1,l(v_{j_3})=c_1,l(v_{j_4})=d_1$. By similar argument, we get $b_1=a_1$ and $c_1=d_1=e$. Since $w(v_{i_1})=w(v_{j_1})$, which implies $b=b_1$. Since $w(v_{i_{n-1}})=w(v_{j_{n-1}}),$ which implies $a=a_1$. Now, $w(v_{i_1})=w(v)$, which implies $e=(2m-1)a$. Hence $A \ncong \prod \mathbb{Z}_{m_i},$ where $m_i \mid (2m-1),$ for all $i$. \\
Conversely, assume that $A \ncong \prod \mathbb{Z}_{m_i},$ where $m_i \mid (2m-1),$ for all $i$. By Proposition \ref{BP2012}, $A$ has an element $a$ such that $o(a) \nmid (2m-1)$. Define $l: V(G) \to A \setminus\{0\}$ by \[l(v_{i_k}) = \begin{cases} \text{$(2m-1)a$} &\quad\text{if $k \equiv 0,3 (\textrm{mod}\ 4)$}\\ \text{$a$} &\quad\text{otherwise,}\\ \end{cases} \] for all $i,k$ and $l(v)=(2m-1)a$. Thus $w(v)= 2ma$, for all $v \in V(G)$.
\item[\bf Case 3:]$n\equiv 1(\textrm{mod}\ 4)$.\\
Assume that $f_{n,m}$ is $A$-vertex magic with $l(v_{i_1})=a,l(v_{i_2})=b,l(v_{i_3})=c,l(v_{i_4})=d$ and $l(v)=e$. Since $w(v_{i_1})=w(v_{i_{n-1}})$, which implies $b=c$. Since $w(v_{i_2})=w(v_{i_3})$, which implies $a=d$. Also $w(v_{i_2})=w(v_{i_{n-1}})$, which implies $a=e$. Let $j \in \{1,2,\ldots,m\}$ and $i \neq j$. Assume that $l(v_{j_1}) = a_1, l(v_{i_2}) =b_1, l(v_{i_3})= c_1, l(v_{i_4}) = d_1$. By similar argument, we get $b_1=c_1, a_1=d_1=e$,which implies $a_1=a=d=d_1$. Since $w(v_{i_1})=w(v_{j_1})$, which implies $b=b_1$. Since $w(v_{i_{n-1}})= w(v_{j_{n-1}})$, which implies $c=c_1$. 
Also, $w(v)=w(v_{i_1})$, which implies $b=(2m-1)a$. Hence $A \ncong \prod \mathbb{Z}_{m_i},$ where $m_i \mid (2m-1),$ for all $i$.\\
Conversely, assume that $A \ncong \prod \mathbb{Z}_{m_i},$ where $m_i \mid (2m-1)$. By Proposition \ref{BP2012}, $A$ has an element $a$ such that $o(a) \nmid (2m-1)$. Define $l: V(G) \to A \setminus\{0\}$ by \[l(v_{i_k}) = \begin{cases} \text{$(2m-1)a$} &\quad\text{if $k \equiv 2,3 (\textrm{mod}\ 4)$}\\ \text{$a$} &\quad\text{otherwise,}\\ \end{cases} \] for all $i,k$ and $l(v)=a$. Thus $w(v)= 2ma,$ for all $v \in V(G)$.
\end{proof}

\begin{theorem}\label{BP1119}
If $n \equiv 0 (\textrm{mod}\ 4)$, then the generalised friendship graph $f_{n,m}$ is group vertex magic.
\end{theorem}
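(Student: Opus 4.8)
The plan is to exhibit, for an \emph{arbitrary} non-trivial Abelian group $A$, a single explicit labeling with magic constant $\mu = 0$; since $A$ is arbitrary, this simultaneously establishes that $f_{n,m}$ is group vertex magic. First I would fix coordinates exactly as in the proof of Theorem \ref{BP1118}: write $V(G) = \{v\} \cup \bigcup_{i=1}^m V_i$, where $v$ is the common vertex, $V_i = \{v_{i_1}, v_{i_2}, \ldots, v_{i_{n-1}}\}$, and the $i$-th cycle is $v\, v_{i_1} v_{i_2} \cdots v_{i_{n-1}}\, v$. Thus $v$ is adjacent to $v_{i_1}$ and $v_{i_{n-1}}$ for every $i$ (so $d_G(v) = 2m$), while every $v_{i_k}$ has degree $2$, adjacent to its two consecutive neighbours along the cycle.

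Next, pick any $a \in A \setminus \{0\}$ (possible since $A$ is non-trivial) and define a period-$4$ labeling on each cycle together with the central value by
\[
l(v) = -a, \qquad
l(v_{i_k}) = \begin{cases} a & \text{if } k \equiv 1, 2 \pmod 4, \\ -a & \text{if } k \equiv 0, 3 \pmod 4, \end{cases}
\]
for all $i$ and all $1 \le k \le n-1$; note every label lies in $A \setminus \{0\}$ because $a \neq 0$ forces $-a \neq 0$. The claimed magic constant is $\mu = 0$. The verification then splits according to the three kinds of vertices. For an interior vertex $v_{i_k}$ with $2 \le k \le n-2$, the weight is $l(v_{i_{k-1}}) + l(v_{i_{k+1}})$, and the block pattern $a,a,-a,-a$ has the property that any two labels at positions differing by $2$ are negatives of each other; checking the four residues of $k$ modulo $4$ thus gives $w(v_{i_k}) = 0$. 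For $v_{i_1}$ the weight is $l(v) + l(v_{i_2}) = -a + a = 0$, and for $v_{i_{n-1}}$ it is $l(v_{i_{n-2}}) + l(v)$, where $n \equiv 0 \pmod 4$ yields $n-2 \equiv 2$, so $l(v_{i_{n-2}}) = a$ and the weight is $a + (-a) = 0$. Finally $w(v) = \sum_{i=1}^m \big(l(v_{i_1}) + l(v_{i_{n-1}})\big)$, and since $n - 1 \equiv 3 \pmod 4$ we have $l(v_{i_{n-1}}) = -a$, so each summand is $a + (-a) = 0$ and hence $w(v) = 0$.

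I expect the only delicate point to be the bookkeeping rather than any genuine obstacle: the hypothesis $4 \mid n$ is used precisely to force the period-$4$ pattern to close up consistently at both ends of each cycle, guaranteeing simultaneously that $l(v_{i_{n-1}}) = -a$ (so each cycle's contribution to $w(v)$ cancels) and that $l(v_{i_{n-2}}) = a$ (so the weight at $v_{i_{n-1}}$ is $0$). Because the construction uses only $a$ and $-a$ and never invokes any finiteness or order condition on $A$, the identical labeling works for every non-trivial Abelian group $A$, which is exactly the definition of group vertex magic. One may also observe that it specializes correctly to $A = \mathbb{Z}_2$, where $-a = a$ makes the labeling constant, consistent with the fact that all vertex degrees of $f_{n,m}$ are even.
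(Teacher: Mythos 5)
Your proposal is correct and is essentially the paper's own proof: the paper uses the identical period-$4$ labeling (with the roles of $a$ and $-a$ swapped, i.e., your labeling is the paper's after substituting $a \mapsto -a$), the same magic constant $\mu = 0$, and the same observation that the construction works uniformly for every non-trivial Abelian group $A$. Your verification is more detailed than the paper's (which simply asserts $w(v)=0$ and refers to a figure), but the underlying argument is the same.
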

\begin{proof}
Let A be a non-trivial Abelian group. 
Let $V_1,V_2,\ldots,V_{m+1}$ be a partition of $V(G)$ with $|V_i| = n-1$, where $i=1,2,\ldots,m$. Let $V_i=\{v_{i_1},v_{i_2},\ldots,v_{i_{n-1}}\}$ and $V_{m+1}= \{v\}$. Define $l : V(G) \to A \setminus \{0\}$ by \[ l(v_{i_k}) = \begin{cases} \text{$-a$} &\quad\text{if $k \equiv 1,2 (\textrm{mod}\ 4)$}\\ \text{$a$} &\quad\text{otherwise}\\ \end{cases} \] and $l(v)= a$. Thus $w(v) = 0,$ for all $v \in V(G)$ (see Figure \ref{BP2fig4}).

\begin{figure}[htbp]
\begin{center}
\unitlength 1mm 
\linethickness{0.4pt}
\ifx\plotpoint\undefined\newsavebox{\plotpoint}\fi 
\begin{picture}(101.6,88.674)(0,0)
\put(29.121,11.995){\circle*{2.04}}
\put(29.121,75.52){\circle*{2.04}}
\put(63.07,12.169){\circle*{2.04}}
\put(63.07,75.694){\circle*{2.04}}
\put(77.246,12.169){\circle*{2.04}}
\put(77.246,75.694){\circle*{2.04}}
\put(29.15,11.851){\line(1,0){34.3}}
\put(29.15,75.377){\line(1,0){34.3}}
\put(37.696,11.995){\circle*{2.04}}
\put(37.696,75.52){\circle*{2.04}}
\put(45.571,11.995){\circle*{2.04}}
\put(45.571,75.52){\circle*{2.04}}
\put(54.321,12.169){\circle*{2.04}}
\put(54.321,75.694){\circle*{2.04}}
\put(63.38,12.131){\line(1,0){.9334}}
\put(65.247,12.154){\line(1,0){.9334}}
\put(67.113,12.177){\line(1,0){.9334}}
\put(68.98,12.201){\line(1,0){.9334}}
\put(70.847,12.224){\line(1,0){.9334}}
\put(72.714,12.247){\line(1,0){.9334}}
\put(74.581,12.271){\line(1,0){.9334}}
\put(76.447,12.294){\line(1,0){.9334}}
\put(63.38,75.656){\line(1,0){.9334}}
\put(65.247,75.679){\line(1,0){.9334}}
\put(67.113,75.703){\line(1,0){.9334}}
\put(68.98,75.726){\line(1,0){.9334}}
\put(70.847,75.75){\line(1,0){.9334}}
\put(72.714,75.773){\line(1,0){.9334}}
\put(74.581,75.797){\line(1,0){.9334}}
\put(76.447,75.82){\line(1,0){.9334}}
\put(16.346,69.632){\circle*{2.04}}
\put(90.546,70.506){\circle*{2.04}}
\put(16.521,35.682){\circle*{2.04}}
\put(90.721,36.557){\circle*{2.04}}
\put(16.521,21.505){\circle*{2.04}}
\put(90.721,22.38){\circle*{2.04}}
\put(16.202,69.601){\line(0,-1){34.3}}
\put(90.403,70.476){\line(0,-1){34.3}}
\put(16.346,61.056){\circle*{2.04}}
\put(90.546,61.931){\circle*{2.04}}
\put(16.346,53.181){\circle*{2.04}}
\put(90.546,54.056){\circle*{2.04}}
\put(16.521,44.431){\circle*{2.04}}
\put(90.721,45.305){\circle*{2.04}}
\put(16.482,35.231){\line(0,-1){1}}
\put(16.507,33.231){\line(0,-1){1}}
\put(16.532,31.231){\line(0,-1){1}}
\put(16.557,29.231){\line(0,-1){1}}
\put(16.582,27.231){\line(0,-1){1}}
\put(16.607,25.231){\line(0,-1){1}}
\put(16.632,23.231){\line(0,-1){1}}
\put(90.683,36.107){\line(0,-1){.9334}}
\put(90.706,34.24){\line(0,-1){.9334}}
\put(90.729,32.373){\line(0,-1){.9334}}
\put(90.753,30.506){\line(0,-1){.9334}}
\put(90.776,28.64){\line(0,-1){.9334}}
\put(90.799,26.773){\line(0,-1){.9334}}
\put(90.823,24.906){\line(0,-1){.9334}}
\put(90.846,23.039){\line(0,-1){.9334}}
\put(51.521,45.422){\circle*{2.04}}
\multiput(16.551,69.6)(.04814305365,-.03369876204){727}{\line(1,0){.04814305365}}
\put(51.551,45.101){\line(0,-1){.175}}
\multiput(51.551,44.926)(-.04992867332,-.03370185449){701}{\line(-1,0){.04992867332}}
\multiput(29.151,75.55)(.03369402985,-.04570746269){670}{\line(0,-1){.04570746269}}
\multiput(51.726,44.926)(.03371559633,.04013630406){763}{\line(0,1){.04013630406}}
\multiput(51.376,44.751)(.05038560411,.03373907455){778}{\line(1,0){.05038560411}}
\multiput(51.201,45.801)(.031818,-.047818){11}{\line(0,-1){.047818}}
\multiput(28.801,11.676)(.03371323529,.04889705882){680}{\line(0,1){.04889705882}}
\multiput(51.726,44.926)(.03370712401,-.04317282322){758}{\line(0,-1){.04317282322}}
\multiput(90.576,22.7)(-.0585090361,.0337364458){664}{\line(-1,0){.0585090361}}
\put(92.85,44.837){\oval(17.5,58.275)[]}
\put(42.975,36.874){\framebox(17.325,16.8)[cc]{}}
\put(51.55,38.974){\makebox(0,0)[cc]{$V_{m+1}$}}
\put(93.9,69.949){\makebox(0,0)[cc]{$-a$}}
\put(94.25,61.025){\makebox(0,0)[cc]{$-a$}}
\put(94.25,53.85){\makebox(0,0)[cc]{$a$}}
\put(94.075,44.4){\makebox(0,0)[cc]{$a$}}
\put(94.425,36.525){\makebox(0,0)[cc]{$-a$}}
\put(94.6,22.175){\makebox(0,0)[cc]{$a$}}
\put(28.45,79.049){\makebox(0,0)[cc]{$-a$}}
\put(45.25,79.224){\makebox(0,0)[cc]{$a$}}
\put(63.1,78.875){\makebox(0,0)[cc]{$-a$}}
\put(54.525,79.224){\makebox(0,0)[cc]{$a$}}
\put(37.375,79.224){\makebox(0,0)[cc]{$-a$}}
\put(77.275,79.049){\makebox(0,0)[cc]{$a$}}
\put(28.625,8.7){\makebox(0,0)[cc]{$-a$}}
\put(45.425,8.875){\makebox(0,0)[cc]{$a$}}
\put(63.275,8.525){\makebox(0,0)[cc]{$-a$}}
\put(54.7,8.875){\makebox(0,0)[cc]{$a$}}
\put(37.55,8.875){\makebox(0,0)[cc]{$-a$}}
\put(77.45,8.7){\makebox(0,0)[cc]{$a$}}
\put(49.1,4.5){\makebox(0,0)[cc]{$V_m$}}
\put(6.4,46.325){\makebox(0,0)[cc]{$V_1$}}
\put(97.575,47.9){\makebox(0,0)[cc]{$V_3$}}
\put(50.325,84.999){\makebox(0,0)[cc]{$V_2$}}
\put(11.5,68.698){\makebox(0,0)[cc]{$-a$}}
\put(11.85,59.774){\makebox(0,0)[cc]{$-a$}}
\put(11.85,52.599){\makebox(0,0)[cc]{$a$}}
\put(11.675,43.149){\makebox(0,0)[cc]{$a$}}
\put(12.025,35.274){\makebox(0,0)[cc]{$-a$}}
\put(12.2,20.924){\makebox(0,0)[cc]{$a$}}
\put(11.125,44.299){\oval(17.75,59.75)[]}
\put(53.625,79.799){\oval(57.25,17.75)[]}
\put(53.875,9.5){\oval(57.75,15)[]}
\put(51.5,50.25){\makebox(0,0)[cc]{$a$}}
\multiput(67.18,27.68)(.0409664,.032563){17}{\line(1,0){.0409664}}
\multiput(68.573,28.787)(.0409664,.032563){17}{\line(1,0){.0409664}}
\multiput(69.965,29.894)(.0409664,.032563){17}{\line(1,0){.0409664}}
\multiput(71.358,31.001)(.0409664,.032563){17}{\line(1,0){.0409664}}
\end{picture}
\end{center}
\caption{Group vertex magic labeling of $f_{n,m}$, where $n \equiv 0 (\textrm{mod}\ 4)$ } \label{BP2fig4}
\end{figure}
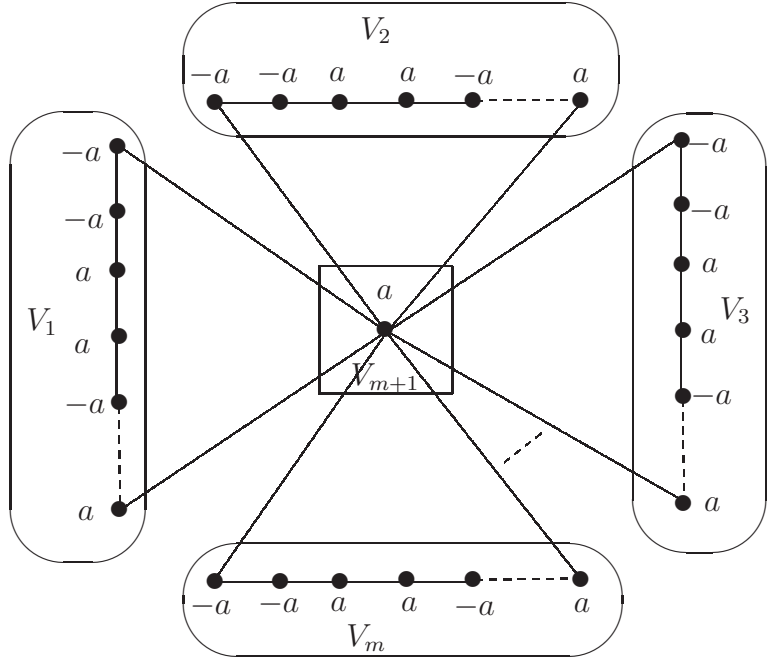
\end{proof}

\begin{theorem}\label{BP2010}
Let $n \equiv 2(\textrm{mod}\ 4)$. The generalised friendship graph $G=f_{n,m}$ is group vertex magic if and only if $m \equiv 1(\textrm{mod}\ 3)$. 
\end{theorem}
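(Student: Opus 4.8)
The plan is to reduce the existence of an $A$-vertex magic labeling to a purely arithmetic condition on the group $A$ and the number $m$, and then to test that condition group by group. Write $c = l(v)$ for the label of the common vertex and, for each cycle $i$, abbreviate $a_k^{(i)} = l(v_{i_k})$ (so the $i$-th cycle reads $v, v_{i_1}, \dots, v_{i_{n-1}}, v$). First I would exploit the equal-weight condition along each cycle: for an interior vertex the relation $w(v_{i_k}) = a_{k-1}^{(i)} + a_{k+1}^{(i)} = \mu$ forces the odd-indexed labels and the even-indexed labels each to alternate between two values, while the two end vertices $v_{i_1}, v_{i_{n-1}}$, being adjacent to $v$, give $a_2^{(i)} = a_{n-2}^{(i)} = \mu - c$. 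Since $n \equiv 2 \pmod 4$ we have $n-2 \equiv 0 \pmod 4$, so propagating the even-index alternation from $a_2^{(i)}=\mu-c$ up to $a_{n-2}^{(i)}$ yields $a_{n-2}^{(i)}=c$ and hence $c = \mu - c$, i.e. $\mu = 2c$. Substituting back shows every even-indexed label equals $c$ while every odd-indexed label is $b_i := a_1^{(i)}$ or $2c - b_i$, with $a_{n-1}^{(i)} = b_i$.

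Next I would record the single constraint coming from the common vertex. Its neighbours are exactly the vertices $v_{i_1}$ and $v_{i_{n-1}}$, so $w(v) = \sum_{i=1}^m (b_i + b_i) = 2\sum_i b_i$, and setting this equal to $\mu = 2c$ gives $2\sum_i b_i = 2c$. Conversely, any choice of $c$ and $b_1,\dots,b_m$ satisfying this identity determines, via the formulas above, a labeling all of whose weights equal $2c$. Thus $f_{n,m}$ is $A$-vertex magic precisely when there exist $c \in A \setminus \{0\}$ and $b_1,\dots,b_m \in A \setminus \{0, 2c\}$ (the two exclusions guaranteeing that the labels $b_i$ and $2c-b_i$ are nonzero) with $2\sum_i b_i = 2c$. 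This equivalence is the technical heart of the argument.

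For necessity I would specialize to $A = \mathbb{Z}_3$. Here $\{0, c, 2c\}$ exhausts the group, so $A \setminus \{0, 2c\} = \{c\}$ and every $b_i$ is forced to equal $c$; the identity becomes $2mc = 2c$, i.e. $(m-1)c = 0$, which, as $c$ has order $3$, holds if and only if $m \equiv 1 \pmod 3$. Hence group vertex magicness (which in particular entails $\mathbb{Z}_3$-vertex magicness) forces $m \equiv 1 \pmod 3$. For sufficiency I would assume $m = 3k+1$ and split on the presence of $2$-torsion. If $A$ has an element $c$ of order $2$, take $b_i = c$ for all $i$: then every $b_i \in A\setminus\{0,2c\}$ and $2\sum_i b_i = m\,(2c) = 0 = 2c$, which is valid for every $m$. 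If $A$ is $2$-torsion-free, choose any $c \neq 0$ and use the zero-sum triple $(c, c, -2c)$, all of whose entries lie in $A \setminus \{0, 2c\}$ because $2c \neq 0$ and $4c \neq 0$; taking $k$ copies of this triple together with one further entry $b = c$ produces $3k+1=m$ admissible values with $\sum_i b_i = c$, whence $2\sum_i b_i = 2c$.

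The main obstacle I anticipate is the bookkeeping in the first step: pinning down the alternation pattern and the index $n-2 \equiv 0 \pmod 4$ exactly, so that the relation $\mu = 2c$ genuinely emerges, and then verifying that the passage to ``$2\sum_i b_i = 2c$'' is a full equivalence rather than merely a necessary condition. The remaining subtlety is conceptual rather than computational: one must resist dividing the identity $2\sum_i b_i = 2c$ by $2$, since the presence or absence of $2$-torsion is exactly what separates the groups that impose the congruence $m \equiv 1 \pmod 3$ (witnessed by $\mathbb{Z}_3$) from the $2$-torsion groups that never obstruct.
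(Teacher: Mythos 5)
Your proposal is correct, and its core device differs from the paper's. The necessity halves agree in spirit: both isolate $\mathbb{Z}_3$ as the obstructing group (the paper derives the label relations $b=d=e$, $a+c=b+d$ and then forces all $\mathbb{Z}_3$-labels to coincide, reaching $2a'=2ma'$; you reach the same congruence through your criterion). The real divergence is in how sufficiency is organized. You first prove a complete arithmetic characterization — $f_{n,m}$ is $A$-vertex magic iff there exist $c\in A\setminus\{0\}$ and $b_1,\dots,b_m\in A\setminus\{0,2c\}$ with $2\sum_i b_i=2c$ — by exploiting the period-4 alternation and the fact that $n-2\equiv 0\pmod 4$ forces $\mu=2c$ and all even-position labels equal to $c$; this computation is done once and reused in both directions. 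Sufficiency then collapses to a two-case torsion dichotomy: an order-$2$ element $c$ gives the constant solution $b_i=c$ (valid for every $m$), and in the $2$-torsion-free case the zero-sum triples $(c,c,-2c)$ plus one extra $c$ realize $\sum_i b_i = c$ when $m=3k+1$ (your side conditions are sound: absence of $2$-torsion does imply $2c\neq 0$ and $4c\neq 0$, since otherwise $c$ or $2c$ would have order $2$). The paper instead runs a prime-by-prime analysis — $p=2$, $p=3$, $p>3$ with separate explicit labelings for $m$ odd and $m$ even — and must invoke its Theorem \ref{BP2002} (via $\mathbb{Z}_p$-magicness for all large primes) to cover torsion-free groups. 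Your route buys uniformity: finite and infinite groups are treated identically, no appeal to Theorem \ref{BP2002}, Sylow theory, or the classification of finite Abelian groups is needed, and the criterion immediately yields the paper's next theorem (the $m\not\equiv 1\pmod 3$, $A\neq\mathbb{Z}_3$ case) as well; the paper's route buys concrete, picture-checkable labelings and reuse of machinery it has already built. One presentational caution: your equivalence silently uses $n\geq 6$ (so that interior vertices exist and the alternation propagates), which is automatic here since $n\equiv 2\pmod 4$ rules out $n=2$, but it is worth saying.
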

\begin{proof}
Let $V_1,V_2,\ldots,V_{m+1}$ be a partition of $V(G)$ with $|V_i|=n-1$, where $i=1,2,\ldots,m$. Let $V_i=\{v_{i_1},v_{i_2},\ldots,v_{i_{n-1}}\}$ and $V_{m+1}=\{v\}$. Assume that $f_{n,m}$ is group vertex magic. By observation, we have $l(v_{i_k}) = l(v_{i_{k+4}})$. Assume that $l(v_{i_1})=a,l(v_{i_2})=b,l(v_{i_3})=c,l(v_{i_4})=d$ and $l(v)=e$. Since $w(v_{i_1})= w(v_{i_{n-1}})$, which implies $b=d$. Since $w(v_{i_1})=w(v_{i_3}),$ which implies $e=d$. Also, $w(v_{i_2})=w(v_{i_3}),$ which implies $a+c=b+d$. Let $j \in \{1,2,\ldots,m\}$ and $i \neq j$. Assume that $l(v_{j_1})=a_1,l(v_{j_2})=b_1,l(v_{j_3})=c_1,l(v_{j_4})=d_1$. By similar argument, we get $b_1=d_1=e$ and $a_1+c_1=b_1+d_1$. Since $w(v_{i_1})=w(v_{j_1})$, which implies $b=b_1$. Since $w(v_{i_{n-1}})=w(v_{j_{n-1}})$, which implies $d=d_1$. Let $a'\in \mathbb{Z}_3 \setminus \{0\}$. Assume that $l(v_{i_2})=l(v_{i_4})=l(v)=a'$. Since $w(v_{i_2})=w(v_{i_3}),$  which implies $l(v_{i_1})=l(v_{i_3})=a'$, therefore $l(v)=a',$ for all $v$. Since $w(v_{i_1})=w(v)$, which implies $2a'=2ma'$, we get $m \equiv 1 (\textrm{mod}\ 3)$.\\
Conversely, assume that $m\equiv 1(\textrm{mod}\ 3)$ and $p$ is a prime number. 
\item[\bf Case 1.] $p=2$.\\If $2$ divides $o(A)$, then define $l: V(G) \to A\setminus\{0\}$ by $l(v)=a,$ for all $v \in V(G)$, where $o(a)=2$. Thus $w(v)= 0,$ for all $v \in V(G)$. \item[\bf Case 2.] $p=3$.\\ If $3$ divides $o(A)$, then define $l: V(G) \to A\setminus\{0\}$ by $l(v)=a,$ for all $v \in V(G)$, where $o(a)=3$. Thus $w(v)=2a,$ for all $v$.\item[\bf Case 3. ]  $p>3$. \\ If $p$ divides $o(A)$, then\item[\bf Subcase 1.]Assume that $m$ is odd.\\ Define $l: V(G) \to A\setminus\{0\}$ by \[l(v_{i_k})=\begin{cases} \text{$-a$} &\quad\text{if $i$ is even and $k \equiv 1 (\textrm{mod}\ 4)$}\\ \text{3a} &\quad\text{if $i$ is even and $k \equiv 3 (\textrm{mod}\ 4)$}\\ \text{$a$} &\quad\text{otherwise}\\ \end{cases} \]and $l(v)=a$, where $o(a)=p$. Thus $w(v)=2a,$ for all $v$. \item[\bf Subcase 2.]Assume that $m$ is even.\\ Define $l: V(G) \to A\setminus\{0\}$ by \[l(v_{i_k})=\begin{cases} \text{3a} &\quad\text{if $i=1$ and $k \equiv 1(\textrm{mod}\ 4)$}\\ \text{$-a$} &\quad\text{if $i=1$ and $k \equiv 3(\textrm{mod}\ 4)$}\\ \text{$-2a$} &\quad\text{if $i=2$ and $k \equiv 1(\textrm{mod}\ 4)$}\\ \text{$4a$} &\quad\text{if $i=2$ and $k \equiv 3(\textrm{mod}\ 4)$}\\ \text{$-a$} &\quad\text{if $i$ is even, $i>2$ and $k \equiv 1(\textrm{mod}\ 4)$}\\ \text{$3a$} &\quad\text{if $i$ is even, $i>2$ and $k \equiv 3 (\textrm{mod}\ 4)$}\\ \text{$a$} &\quad\text{otherwise}\\ \end{cases} \] and $l(v)=a$, where $o(a)=p$. Thus $w(v)=2a, $ for all $v \in V(G)$ (see Figure \ref{BP2fig5}). 
\\ By Theorem $\ref{BP2002}$ and Case 3, we get $G$ is $\mathbb{Z}$-vertex magic.
\end{proof}

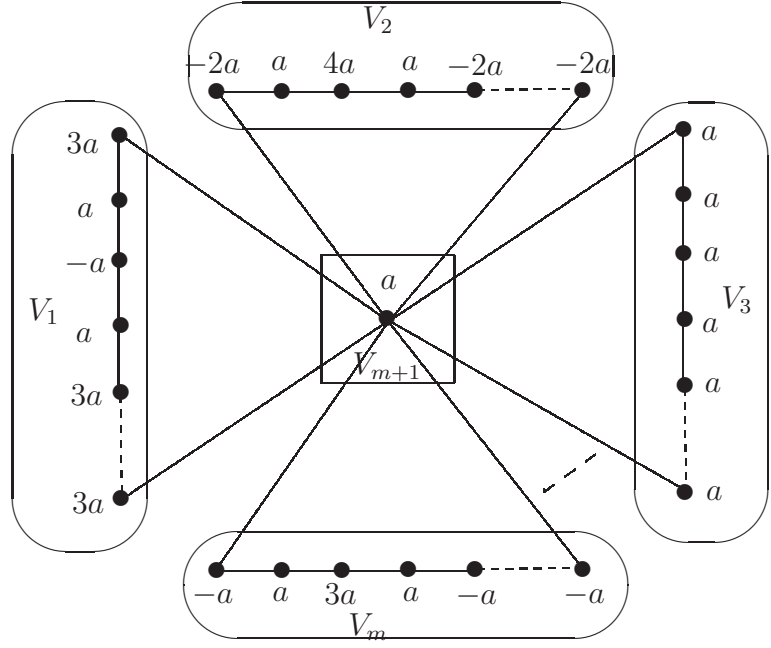
\begin{figure}[htbp]
\begin{center}
\unitlength 1mm 
\linethickness{0.4pt}
\ifx\plotpoint\undefined\newsavebox{\plotpoint}\fi 
\begin{picture}(107.1,85.851)(0,0)
\put(34.621,10.57){\circle*{2.04}}
\put(34.621,74.096){\circle*{2.04}}
\put(68.57,10.744){\circle*{2.04}}
\put(68.57,74.269){\circle*{2.04}}
\put(82.746,10.744){\circle*{2.04}}
\put(82.746,74.269){\circle*{2.04}}
\put(34.65,10.427){\line(1,0){34.3}}
\put(34.65,73.953){\line(1,0){34.3}}
\put(43.196,10.57){\circle*{2.04}}
\put(43.196,74.096){\circle*{2.04}}
\put(51.071,10.57){\circle*{2.04}}
\put(51.071,74.096){\circle*{2.04}}
\put(59.821,10.744){\circle*{2.04}}
\put(59.821,74.269){\circle*{2.04}}
\put(68.88,10.706){\line(1,0){.9334}}
\put(70.747,10.729){\line(1,0){.9334}}
\put(72.613,10.753){\line(1,0){.9334}}
\put(74.48,10.776){\line(1,0){.9334}}
\put(76.347,10.8){\line(1,0){.9334}}
\put(78.214,10.823){\line(1,0){.9334}}
\put(80.081,10.847){\line(1,0){.9334}}
\put(81.947,10.87){\line(1,0){.9334}}
\put(68.88,74.231){\line(1,0){.9334}}
\put(70.747,74.254){\line(1,0){.9334}}
\put(72.613,74.278){\line(1,0){.9334}}
\put(74.48,74.301){\line(1,0){.9334}}
\put(76.347,74.325){\line(1,0){.9334}}
\put(78.214,74.348){\line(1,0){.9334}}
\put(80.081,74.372){\line(1,0){.9334}}
\put(81.947,74.395){\line(1,0){.9334}}
\put(21.846,68.207){\circle*{2.04}}
\put(96.046,69.081){\circle*{2.04}}
\put(22.021,34.258){\circle*{2.04}}
\put(96.221,35.133){\circle*{2.04}}
\put(22.021,20.081){\circle*{2.04}}
\put(96.221,20.956){\circle*{2.04}}
\put(21.702,68.176){\line(0,-1){34.3}}
\put(95.903,69.051){\line(0,-1){34.3}}
\put(21.846,59.632){\circle*{2.04}}
\put(96.046,60.506){\circle*{2.04}}
\put(21.846,51.756){\circle*{2.04}}
\put(96.046,52.632){\circle*{2.04}}
\put(22.021,43.006){\circle*{2.04}}
\put(96.221,43.88){\circle*{2.04}}
\put(21.982,33.806){\line(0,-1){.9999}}
\put(22.007,31.806){\line(0,-1){.9999}}
\put(22.032,29.806){\line(0,-1){.9999}}
\put(22.057,27.806){\line(0,-1){.9999}}
\put(22.082,25.806){\line(0,-1){.9999}}
\put(22.107,23.806){\line(0,-1){.9999}}
\put(22.132,21.807){\line(0,-1){.9999}}
\put(96.183,34.682){\line(0,-1){.9333}}
\put(96.206,32.815){\line(0,-1){.9333}}
\put(96.229,30.948){\line(0,-1){.9333}}
\put(96.253,29.082){\line(0,-1){.9333}}
\put(96.276,27.215){\line(0,-1){.9333}}
\put(96.299,25.348){\line(0,-1){.9333}}
\put(96.323,23.482){\line(0,-1){.9333}}
\put(96.346,21.615){\line(0,-1){.9333}}
\put(57.021,43.998){\circle*{2.04}}
\multiput(22.051,68.175)(.04814305365,-.03369876204){727}{\line(1,0){.04814305365}}
\put(57.051,43.676){\line(0,-1){.175}}
\multiput(57.051,43.502)(-.04992867332,-.03370185449){701}{\line(-1,0){.04992867332}}
\multiput(34.651,74.125)(.03369402985,-.04570597015){670}{\line(0,-1){.04570597015}}
\multiput(57.226,43.502)(.03371559633,.04013499345){763}{\line(0,1){.04013499345}}
\multiput(56.876,43.326)(.05038560411,.03373907455){778}{\line(1,0){.05038560411}}
\multiput(56.701,44.376)(.031818,-.047818){11}{\line(0,-1){.047818}}
\multiput(34.301,10.252)(.03371323529,.04889705882){680}{\line(0,1){.04889705882}}
\multiput(57.226,43.502)(.03370712401,-.04317414248){758}{\line(0,-1){.04317414248}}
\multiput(96.076,21.276)(-.0585090361,.0337349398){664}{\line(-1,0){.0585090361}}
\put(59.5,8.588){\oval(58.45,14.175)[]}
\put(98.35,43.412){\oval(17.5,58.275)[]}
\put(48.475,35.449){\framebox(17.325,16.8)[cc]{}}
\put(57.05,37.549){\makebox(0,0)[cc]{$V_{m+1}$}}
\put(99.4,68.524){\makebox(0,0)[cc]{$a$}}
\put(99.75,59.6){\makebox(0,0)[cc]{$a$}}
\put(99.75,52.425){\makebox(0,0)[cc]{$a$}}
\put(99.575,42.975){\makebox(0,0)[cc]{$a$}}
\put(99.925,35.1){\makebox(0,0)[cc]{$a$}}
\put(100.1,20.751){\makebox(0,0)[cc]{$a$}}
\put(33.95,77.624){\makebox(0,0)[cc]{$-2a$}}
\put(50.75,77.799){\makebox(0,0)[cc]{$4a$}}
\put(68.6,77.451){\makebox(0,0)[cc]{$-2a$}}
\put(60.025,77.799){\makebox(0,0)[cc]{$a$}}
\put(42.875,77.799){\makebox(0,0)[cc]{$a$}}
\put(82.775,77.624){\makebox(0,0)[cc]{$-2a$}}
\put(34.125,7.276){\makebox(0,0)[cc]{$-a$}}
\put(50.925,7.451){\makebox(0,0)[cc]{$3a$}}
\put(68.775,7.101){\makebox(0,0)[cc]{$-a$}}
\put(60.2,7.451){\makebox(0,0)[cc]{$a$}}
\put(43.05,7.451){\makebox(0,0)[cc]{$a$}}
\put(82.95,7.276){\makebox(0,0)[cc]{$-a$}}
\put(54.6,3.075){\makebox(0,0)[cc]{$V_m$}}
\put(11.9,44.9){\makebox(0,0)[cc]{$V_1$}}
\put(103.075,46.475){\makebox(0,0)[cc]{$V_3$}}
\put(58.887,77.451){\oval(55.825,16.8)[]}
\put(55.825,83.574){\makebox(0,0)[cc]{$V_2$}}
\put(17,67.273){\makebox(0,0)[cc]{$3a$}}
\put(17.35,58.349){\makebox(0,0)[cc]{$a$}}
\put(17.35,51.175){\makebox(0,0)[cc]{$-a$}}
\put(17.175,41.725){\makebox(0,0)[cc]{$a$}}
\put(17.525,33.849){\makebox(0,0)[cc]{$3a$}}
\put(17.7,19.5){\makebox(0,0)[cc]{$3a$}}
\put(16.625,42.874){\oval(17.75,59.75)[]}
\put(77.68,20.93){\line(0,1){.5}}
\multiput(77.68,20.93)(.045,.0333333){30}{\line(1,0){.045}}
\multiput(80.38,22.93)(.045,.0333333){30}{\line(1,0){.045}}
\multiput(83.08,24.93)(.045,.0333333){30}{\line(1,0){.045}}
\put(57.25,48.749){\makebox(0,0)[cc]{$a$}}
\end{picture}
\end{center} 
\caption{$\mathbb{Z}_p$-vertex magic of $f_{n,m}$, $p > 3$, $n \equiv 2(\textrm{mod}\ 4)$ and $m$ is even} \label{BP2fig5}
\end{figure}

\begin{theorem}
Let $n \equiv 2(\textrm{mod}\ 4)$ and $m \not \equiv 1(\textrm{mod}\ 3)$. The generalised friendship graph $f_{n,m}$ is $A$-vertex magic if and only if $A \neq \mathbb{Z}_3$.
\end{theorem}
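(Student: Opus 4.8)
The plan is to reduce the problem to a single linear condition over $A$ and then analyse when it is solvable. Following the analysis in the proof of Theorem~\ref{BP2010}, I would first record that for \emph{any} $A$-vertex magic labeling of $f_{n,m}$ with $n\equiv 2\pmod 4$ the forced periodicity $l(v_{i_k})=l(v_{i_{k+4}})$ together with the equalities $w(v_{i_1})=w(v_{i_3})=w(v_{i_{n-1}})$ gives $l(v_{i_2})=l(v_{i_4})=l(v)=:e$ and $l(v_{i_1})+l(v_{i_3})=2e$; writing $a^{(i)}:=l(v_{i_1})$ this says $l(v_{i_3})=2e-a^{(i)}$, and since $n-1\equiv 1\pmod4$ we also have $l(v_{i_{n-1}})=a^{(i)}$. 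Evaluating the weight at the central vertex, $w(v)=\sum_{i=1}^m\bigl(l(v_{i_1})+l(v_{i_{n-1}})\bigr)=2\sum_{i=1}^m a^{(i)}$, and equating it with the common value $\mu=2e$ yields the master equation $2\sum_{i=1}^m a^{(i)}=2e$. Thus $f_{n,m}$ is $A$-vertex magic if and only if one can choose $e\neq 0$ and $a^{(1)},\dots,a^{(m)}\in A\setminus\{0,2e\}$ (the condition $a^{(i)}\neq 2e$ being exactly $l(v_{i_3})\neq 0$) with $2\sum a^{(i)}=2e$; conversely any such choice, extended periodically, is a valid labeling with magic constant $2e$, and I would check this last verification once.

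For the forward implication I would specialise to $A=\mathbb{Z}_3$ and show the system is unsolvable when $m\not\equiv1\pmod3$. Here $e\in\{1,2\}$ forces $2e\in\{2,1\}$, so $A\setminus\{0,2e\}$ is a singleton and every $a^{(i)}$ is forced to equal $e$. The master equation then reads $2me=2e$, i.e. $2(m-1)e=0$; since $o(e)=3$ this holds only when $3\mid(m-1)$, i.e. $m\equiv1\pmod3$, contradicting the hypothesis. Hence $\mathbb{Z}_3$ is not admissible, which is the ``only if'' part.

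For the converse I would split on $2$-torsion. If $A$ has an element $e$ of order $2$, then $2e=0$, the master equation becomes $2\sum a^{(i)}=0$, and it suffices to take $\sum_{i=1}^m a^{(i)}=0$ with all $a^{(i)}\neq 0$; such a decomposition exists for $m\ge 2$ by Lemma~\ref{BP2011} (Case~$1$) whenever $|A|\ge 3$, while $A=\mathbb{Z}_2$ is handled directly by the all-ones labeling (there $2x=0$ makes the master equation automatic). If $A$ has no element of order $2$, then multiplication by $2$ is injective, so the master equation collapses to $\sum_{i=1}^m a^{(i)}=e$, and I must represent $e$ as a sum of $m$ elements of $A\setminus\{0,2e\}$. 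Since $A$ is non-trivial, has no element of order $2$, and is not $\mathbb{Z}_3$, Cauchy's theorem gives that $A$ is either infinite or of odd order at least $5$, so $A\setminus\{0,2e\}$ has at least three elements. I would produce the representation from two base cases — $m=2$ (choose $p\notin\{0,2e,e,-e\}$ and set $a^{(1)}=p,\ a^{(2)}=e-p$) and $m=3$ (take $a^{(1)}=e$ and a cancelling pair $a^{(2)}=r,\ a^{(3)}=-r$ with $r\notin\{0,2e,-2e\}$) — and then raise $m$ in steps of two by appending further pairs $(r,-r)$. The sets $\{0,2e,e,-e\}$ and $\{0,2e,-2e\}$ have at most four elements, and they are genuinely this large precisely because $A$ has no $2$-torsion, so $2e\neq 0$ and $4e\neq 0$ for $e\neq 0$; hence the required choices of $p$ and $r$ exist.

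The main obstacle is this last case: constructing, in an arbitrary $2$-torsion-free Abelian group other than $\mathbb{Z}_3$, a length-$m$ representation of $e$ that simultaneously avoids both $0$ and the single extra value $2e$. This is a sharpening of Lemma~\ref{BP2011} by one forbidden element, and the content of the theorem is exactly that this one extra constraint can always be met once $|A|\ge 5$ (or $A$ is infinite), whereas it cannot be met for $|A|=3$. I would therefore present the $m=2,3$ base cases and the pairing step with care, tracking the few coincidences among $0,\pm e,\pm 2e$ that arise when $3e=0$ or $4e=0$, since those are the only places where the count of available elements is tight.
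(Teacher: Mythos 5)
Your proof is correct, and it takes a genuinely different route from the paper's, most visibly in the sufficiency direction. The paper obtains necessity by citing Theorem~\ref{BP2010} (whose proof contains the same $\mathbb{Z}_3$ obstruction you rederive), and proves sufficiency by structure theory: using Cauchy, Sylow and the fundamental theorem of finite Abelian groups it locates inside $A$ an element of order $2$ or a subgroup isomorphic to $\mathbb{Z}_3\times\mathbb{Z}_3$, $\mathbb{Z}_9$, $\mathbb{Z}_p$ ($p>3$) or $\mathbb{Z}$, then exhibits a labeling in each case (new labelings for $\mathbb{Z}_3\times\mathbb{Z}_3$, the rest recycled from Case~3 of Theorem~\ref{BP2010} together with Theorem~\ref{BP2002}). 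You instead compress all the graph theory into one reusable equivalence --- $f_{n,m}$ is $A$-vertex magic iff there exist $e\neq 0$ and $a^{(1)},\dots,a^{(m)}\in A\setminus\{0,2e\}$ with $2\sum_{i}a^{(i)}=2e$ --- after which necessity is a short $\mathbb{Z}_3$ computation ($2(m-1)e=0$ forces $m\equiv 1\pmod 3$), and sufficiency is elementary group theory: Lemma~\ref{BP2011} when $A$ has an element of order $2$, and your $m=2,3$ bases plus $(r,-r)$-padding when it does not (doubling is then injective, so the equation collapses to $\sum_i a^{(i)}=e$, and the forbidden sets $\{0,2e,e,-e\}$ and $\{0,2e,-2e\}$ cannot exhaust $A$ because $|A|\ge 5$ or $A$ is infinite). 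I checked your equivalence in both directions (the condition $a^{(i)}\neq 2e$ is exactly $l(v_{i_3})\neq 0$, and $n-1\equiv 1\pmod 4$ indeed puts $a^{(i)}$ at both ends of each path), and your case analysis is exhaustive since $m\not\equiv 1\pmod 3$ forces $m\ge 2$. What your route buys: no Sylow, no structure theorem, no appeal to Theorem~\ref{BP2002}, and a uniform treatment of finite and infinite groups (the paper's cases ``$2$ divides $o(A)$'' and ``$9$ divides $o(A)$'' need reinterpretation for infinite torsion groups, e.g.\ infinite elementary Abelian $3$-groups). What the paper's route buys: it needs no new characterization lemma and reuses labelings already displayed in Theorem~\ref{BP2010}. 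One cosmetic point: to get $l(v_{i_1})+l(v_{i_3})=2e$ you must also invoke $w(v_{i_2})=\mu$, not only the three weight equalities you list --- harmless, since all weights agree in a magic labeling.
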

\begin{proof}
Let $V_1,V_2,\ldots,V_{m+1}$ be a partition of $V(G)$ with $|V_i|=n-1$, where $i=1,2,\ldots,m$. Let $V_i=\{v_{i_1},v_{i_2},\ldots,v_{i_{n-1}}\}$ and $V_{m+1}=\{v\}$.
The necessary condition follows from Theorem \ref{BP2010}. 
Conversely, assume that $A \neq \mathbb{Z}_3$.
\item[\bf Case 1.] $2$ divides $o(A)$.\\
The  proof follows from
the labeling in case $1$ of above theorem.
\item[\bf Case 2.] $9$ divides $o(A)$.\\
By Sylow's first theorem and fundamental theorem of finite Abelian groups, $A$ has a subgroup isomorphic to either $\mathbb{Z}_9$ or $\mathbb{Z}_3 \times \mathbb{Z}_3$. Suppose $A$ has a subgroup isomorphic to $\mathbb{Z}_3 \times \mathbb{Z}_3$.
\item[\bf Subcase 1.] Assume that $m$ is odd.
Define $l : V(G) \to \mathbb{Z}_3 \times \mathbb{Z}_3 \setminus\{0\}$ by 
\[l(v_{i_k}) = \begin{cases} \text{$(1,0)$} &\quad\text{if $i > 1, i$ is even and $k \equiv 1(\textrm{mod}\ 4)$}\\ \text{$(1,2)$} &\quad\text{if $i > 1, i$ is even and $k \equiv 3(\textrm{mod}\ 4)$}\\  \text{$(2,0)$} &\quad\text{if $i > 1, i$ is odd and $k \equiv 1(\textrm{mod}\ 4)$}\\ \text{$(0,2)$} &\quad\text{if $i > 1, i$ is odd and $k \equiv 3(\textrm{mod}\ 4)$}\\ \text{$(1,1)$} &\quad\text{otherwise}\\ \end{cases} \] for all $i,k$ and $l(v)=(1,1)$
\item[\bf Subcase 2.]Assume that $m$ is even.
Define $l : V(G) \to \mathbb{Z}_3 \times \mathbb{Z}_3 \setminus\{0\}$ by 
\[l(v_{i_k}) = \begin{cases} \text{$(1,0)$} &\quad\text{if $i = 1$ and $k \equiv 1(\textrm{mod}\ 4)$}\\ \text{$(1,2)$} &\quad\text{if $i = 1$ and $k \equiv 3(\textrm{mod}\ 4)$}\\ \text{$(0,1)$} &\quad\text{if $i$ is even and $k \equiv 1(\textrm{mod}\ 4)$}\\  \text{$(2,1)$} &\quad\text{if $i$ is even and $k \equiv 3(\textrm{mod}\ 4)$}\\ \text{$(0,2)$} &\quad\text{if $i > 2, i$ is odd and $k \equiv 1(\textrm{mod}\ 4)$}\\ \text{$(2,0)$} &\quad\text{if $i > 2, i$ is odd and $k \equiv 3(\textrm{mod}\  4)$}\\ \text{$(1,1)$} &\quad\text{otherwise}\\ \end{cases} \] for all $i,k$ and $l(v)=(1,1)$. Thus $w(v)=(2,2)$ for all $v \in V(G).$\\
Suppose $A$ has a subgroup isomorphic to $\mathbb{Z}$, $\mathbb{Z}_9$ or $\mathbb{Z}_p$, where $p >3$, then the proof follows from Therorem $\ref{BP2002}$ and the labeling in case 3 of above theorem.
\end{proof}
 
\section{Concluding remarks}
In this paper we have proved the $A$-vertex magicness of several graphs by considering its reduced graph. These ideas helps us to embed every graph as an induced subgraph of an $A$-vertex magic graph and also to construct infinite families of $A$-vertex magic graphs in a much more simpler way.   

\section*{Acknowledgements} 
The work of S. Balamoorthy is supported by a Junior Research Fellowship from CSIR-UGC, India(UGC-Ref.No.:1085/(CSIR-UGC NET JUNE 2019)). The authors would like to thank Dr T. Kavaskar, Central University of Tamil Nadu, Thiruvarur for the support and fruitful discussions.

\end{document}